\documentclass[11pt,reqno]{amsart}
\usepackage{amsmath, graphicx}
\usepackage{amsfonts, amssymb}
\usepackage{amsthm}
\usepackage[english]{babel}

\setcounter{MaxMatrixCols}{10}

\numberwithin{equation}{section}

\newtheorem{theorem}{Theorem}[section]
\newtheorem{corollary}[theorem]{Corollary}
\newtheorem{lemma}[theorem]{Lemma}
\newtheorem{proposition}[theorem]{Proposition}

\theoremstyle{definition}
\newtheorem{definition}[theorem]{Definition}
\newtheorem{remark}[theorem]{Remark}

\setlength{\textheight}{220mm} \setlength{\textwidth}{160mm}
\setlength{\oddsidemargin}{2mm} \setlength{\evensidemargin}{2mm}
\setlength{\topmargin}{0mm}

\newcommand{\R}{{\mathbb R}}
\newcommand{\N}{{\mathbb N}}
\newcommand{\length}{{\rm length}}
\newcommand{\dist}{{\rm dist}}
\newcommand{\cf}{{\it cf.}}
\newcommand{\ie}{{\it i.e.}}

\long\def\forget#1\forgotten{}

\title[self-contracted planar curves and gradient orbits of convex
  functions]
{Asymptotic behaviour of self-contracted planar curves and gradient orbits of convex functions}
\author[A.~Daniilidis]{Aris Daniilidis$^{*}$}

\address{Aris Daniilidis \newline
Departament de Matem\`{a}tiques, C1/320, Universitat Aut\`{o}noma de
Barcelona, E--08193 Bellaterra (Cerdanyola del Vall\`{e}s), Spain.
\newline Universit\'e Fran\c{c}ois Rabelais, Tours. Laboratoire de
Math\'ematiques et  Physique Th\'eorique. CNRS, UMR 6083.
F\'ed\'eration de Recherche Denis Poisson (FR 2964). Parc de
Grandmont, 37400 Tours, France}

\email{arisd@mat.uab.es}
\urladdr{http://mat.uab.es/\symbol{126}arisd}

\author[O.~Ley]{Olivier Ley}
\author[S.~Sabourau]{St\'ephane Sabourau}

\address{
Olivier Ley and St\'ephane Sabourau \newline Universit\'e
Fran\c{c}ois Rabelais, Tours. Laboratoire de Math\'ematiques et
Physique Th\'eorique. CNRS, UMR 6083. F\'ed\'eration de Recherche
Denis Poisson (FR 2964). Parc de Grandmont, 37400 Tours, France}
\email{ley@lmpt.univ-tours.fr}
\urladdr{http://www.lmpt.univ-tours.fr/\symbol{126}ley}

\email{sabourau@lmpt.univ-tours.fr}
\urladdr{http://www.lmpt.univ-tours.fr/\symbol{126}sabourau}

\date{\today}

\subjclass[2000] {Primary 37C10 ; Secondary 34D20, 37B35, 37N40,
52A10}

\keywords{Planar dynamical system, gradient trajectory, convex function, convex
foliation, \L ojasiewicz inequality.}

\thanks{$^{*}$ Supported by the MEC Grant No. MTM2005-08572-C03-03 (Spain).}

\begin{document}


\forget Rien n'apparait dans le document ! \forgotten


\begin{abstract}
We hereby introduce and study the notion of self-contracted curves,
which encompasses orbits of gradient systems of convex and
quasiconvex functions. Our main result shows that bounded
self-contracted planar curves have a finite length. We also give an
example of a convex function defined in the plane whose gradient
orbits spiral infinitely many times around the unique minimum of the
function.
\end{abstract}

\maketitle

\tableofcontents

\section{Introduction}

This work is mainly devoted to the study of the length of bounded
trajectories of the gradient flow of convex (or quasiconvex)
functions in the plane. The motivation for this study comes from a
well-known result due to S.~\L ojasiewicz (see \cite{Loja93}),
asserting that if $f:\mathbb{R}^{n}\rightarrow \mathbb{R}$ is a
real-analytic function and $\bar{x}\in f^{-1}(0)$ is a critical
point of $f$, then there exist two constants $\rho\in [1/2,1)$ and $C>0$
such that
\begin{eqnarray}\label{loja-ineq}
||\nabla f(x)||\geq C \, |f(x)|^\rho
\end{eqnarray}
for all $x$ belonging to a neighborhood $U$ of $\bar{x}$. An
immediate by-product is the finite length of the orbits of the
gradient flow of $f$ lying in $U$. The proof is straightforward
using \eqref{loja-ineq} and is illustrated below: let
$\gamma:[0,+\infty)\to U$ be a gradient trajectory of~$f$, that is,
$\dot\gamma(t)=-\nabla f(\gamma(t))$. Then,
\begin{eqnarray*}
-\left(\frac{1}{1-\rho}\right)\, \frac{d}{dt} \left[ f(\gamma(t))^{1-\rho} \right]
=-\langle \dot\gamma(t),\nabla f(\gamma(t)\rangle \,
f(\gamma(t))^{-\rho} = ||\dot\gamma(t)||^2 \,f(\gamma(t))^{-\rho}
\geq C \, ||\dot\gamma(t)||,
\end{eqnarray*}
yielding
\begin{eqnarray}\label{long-finie}
\length(\gamma)=\int_0^{+\infty}||\dot\gamma(t)||dt <+\infty.
\end{eqnarray}
The aforementioned inequality \eqref{loja-ineq} has been extended by
K.~Kurdyka in~\cite{Kur98} for $C^1$ functions belonging to an
arbitrary o-minimal structure (we refer to \cite{Dries-Miller96} for
the relevant definition), in a way that allows us again to deduce the
finiteness of the lengths of the gradient orbits in this more
general context. In \cite{BDL04} and~\cite{BDLS-2008}, a further
extension has been realized to encompass (nonsmooth) functions and
orbits of the corresponding subgradient systems.
\smallskip

It should be noted that in the above cases the functions enjoy an
important structural property (o-minimality) and that, for general
functions, there is no hope to prove a result like
\eqref{long-finie}. A classical example of J.~Palis and W.~De Melo
(\cite[page~14]{PalDem82}) asserts that the bounded trajectories of
the gradient flow of an arbitrary $C^{\infty}$ function need not
converge (in particular, they are of infinite length). In the
aforementioned example the critical set of the function is not
reduced to a singleton: in Section~\ref{subsec:1}, we provide another
example of a smooth function having a unique critical point towards
which all corresponding orbits converge, but again are of infinite
length.
\smallskip

The case when $f$ is a convex coercive function is particularly
interesting in view of its potential impact in numerical
optimization (see \cite{AMA2005}, \cite{BDL04}, \cite{BDLM-2008},
for example). But convex functions are far from being analytic and
they do not satisfy neither the \L ojasiewicz inequality nor its
generalized form established by Kurdyka, unless a growth condition
is assumed (see~\cite[Sections~4.2--4.3]{BDLM-2008} for a sufficient
condition and a counter-example). Nevertheless, their rigid
structure makes it natural to think that the orbits of their gradient
flow are of finite length. It is rather surprising that the answer
of this question is not yet known in the literature except in some
particular cases.
\smallskip

Let us mention that in the framework of Hilbert spaces, this has
been stated as an open problem by H.~Br\'ezis~\cite[Open problems,
p.~167]{Brezis}. In infinite dimension, R.~Bruck~\cite{bruck75}
proved that the (sub)gradient orbits of convex coercive functions
are converging towards a global minimizer of $f$ but this
convergence holds only with respect to the weak topology. Indeed,
B.~Baillon~\cite{baillon78} constructed a counterexample of a lower
semicontinuous convex function $f$ in a Hilbert space whose gradient
orbits do not converge for the norm topology. A straightforward
consequence is that these orbits have infinite length. Concurrently,
there are some cases where a convex coercive function $f:H\to \R$ is
known to have (sub)gradient orbits of finite length. This is true
when the set of minimizers of $f$ has nonempty interior in the
Hilbert space $H$ (see H.~Br\'ezis~\cite{Brezis}), or whenever $f$
satisfies a growth condition. For a detailed discussion and the
proofs of these facts, we refer to \cite[Section~3-4]{BDLM-2008}.

\smallskip

The aforementioned results do not cover the simplest case of a
convex smooth function defined in the plane and having a unique
minimum. One of the main results of this work is to prove the
following:

\begin{theorem}[Convex gradient system]\label{thm-intro-convex}
Let $f:\R^{2} \to \R$ be a smooth convex function with a unique
minimum. Then, the trajectories~$\gamma$ of the gradient system
\begin{equation*}
\dot{\gamma}(t)=-\nabla f(\gamma(t))
\end{equation*}
have a finite length.
\end{theorem}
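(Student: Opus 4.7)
The plan is to invoke the main structural theorem announced in the abstract — that a bounded self-contracted planar curve has finite length — after showing that the gradient trajectories of $f$ fall into this class and that they are bounded. So the proof reduces to three separate verifications, one of which is the core analytic/geometric result of the paper.

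First I would verify \emph{boundedness}. Let $\bar x$ denote the unique minimizer of $f$. Computing
\begin{equation*}
\frac{d}{dt}\,\tfrac{1}{2}\|\gamma(t)-\bar x\|^{2}
= \langle\dot\gamma(t),\gamma(t)-\bar x\rangle
= -\langle\nabla f(\gamma(t)),\gamma(t)-\bar x\rangle
\leq -\bigl(f(\gamma(t))-f(\bar x)\bigr)\leq 0,
\end{equation*}
where the first inequality is the subgradient inequality for the convex function $f$, shows that $\|\gamma(t)-\bar x\|$ is nonincreasing, so $\gamma$ stays in the closed disk of radius $\|\gamma(0)-\bar x\|$ centred at $\bar x$.

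Next I would establish the \emph{self-contracted property}: for every $t_{1}\leq t_{2}\leq t_{3}$ one has $\|\gamma(t_{2})-\gamma(t_{3})\|\leq \|\gamma(t_{1})-\gamma(t_{3})\|$. Freezing $p:=\gamma(t_{3})$ and setting $\phi(t):=\tfrac{1}{2}\|\gamma(t)-p\|^{2}$ on $[t_{1},t_{2}]$, the same computation as above gives
\begin{equation*}
\phi'(t)=-\langle\nabla f(\gamma(t)),\gamma(t)-p\rangle
\leq -\bigl(f(\gamma(t))-f(p)\bigr).
\end{equation*}
Because $f\circ\gamma$ is nonincreasing and $t\leq t_{3}$, we have $f(\gamma(t))\geq f(p)$, so $\phi'\leq 0$ on $[t_{1},t_{2}]$ and the desired inequality follows. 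Combined with the boundedness of $\gamma$, this places $\gamma$ in the scope of the self-contracted curve theorem and yields $\length(\gamma)<+\infty$.

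The main obstacle is clearly the geometric theorem being invoked: turning the soft monotonicity of pairwise distances into a quantitative bound on arc length. Gradient-flow arguments in the spirit of \eqref{long-finie} are unavailable here because a convex coercive $f$ need not satisfy any \L ojasiewicz-type inequality. I therefore expect the proof of the underlying finite-length statement to use planar geometry in an essential way — for instance, an angular/ordering argument on successive chords of $\gamma$, a winding or oscillation bound relying on the Jordan curve theorem, or a careful polygonal approximation exploiting that in $\R^{2}$ a point and its forward trajectory split the ambient space — features which have no direct analogue in higher dimensions and which single out this step as the genuinely new ingredient of the paper.
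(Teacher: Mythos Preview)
Your proposal is correct and follows precisely the paper's route: establish that the gradient trajectory is a bounded self-contracted curve via the convexity inequality (this is exactly the computation carried out in Proposition~\ref{proposition_GDS} and its convex analogue), and then invoke the main geometric result, Theorem~\ref{Theorem_main}. Your speculation that the latter is proved by a polygonal approximation combined with planar angular/ordering arguments is also accurate.
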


\noindent The proof of this result does not
use the whole convexity of $f$ but, instead, rather relies on the
convexity of its level-sets. More precisely, the conclusion of Theorem~\ref{thm-intro-convex} remains
also true for the orbits of the gradient flow of a quasiconvex
function (see Corollary~\ref{Corollary_quasiconvex}).\smallskip

Actually, both results will follow as consequences of a much more
general result (Theorem~\ref{Theorem_main}) about
bounded \emph{self-contracted} planar curves, which allows us to provide
a unified framework for this study.

\begin{definition}[Self-contracted curve] \label{Definition_Self-contracted}
A curve $\gamma:I \rightarrow\mathbb{R}^{n}$ defined on an
interval~$I$ of $[0,+\infty)$ is called {\em self-contracted}, if
for every $t_{1} \leq t_{2} \leq t_{3}$, with $t_{i} \in I$, we have
\begin{equation} \label{eq:self}
\dist(\gamma(t_{1}),\gamma(t_{3})) \geq
\dist(\gamma(t_{2}),\gamma(t_{3})).
\end{equation}
In other words, for every $[a,b]\subset I,$ the map $t\in [a,b]
\mapsto \dist(\gamma(t),\gamma(b))$ is nonincreasing.
\end{definition}

We prove the following.

\begin{theorem}[Main result] \label{Theorem_main}
Every bounded continuous self-contracted planar curve~$\gamma$ is of
finite length. More precisely,
\begin{equation*}
\length(\gamma) \leq (8 \pi + 2) \, D(\gamma)
\end{equation*}
where $D(\gamma)$ is the distance between the endpoints of~$\gamma$.
\end{theorem}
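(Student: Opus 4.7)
The plan is to work in polar coordinates centred at $\gamma(b)$ (translating so that $\gamma(b)=0$, which is harmless) and combine a radial/angular decomposition of the length with a dyadic splitting into annuli around the origin. Applying \eqref{eq:self} with $t_3=b$ gives that $r(t):=|\gamma(t)|$ is non-increasing, so $r$ runs monotonically from $D(\gamma)$ down to $0$. Writing $\gamma(t)=r(t)e^{i\theta(t)}$ with a continuous angular lift $\theta$ and using $|\dot\gamma|^{2}=\dot r^{2}+r^{2}\dot\theta^{2}$, a pointwise $\sqrt{a^{2}+b^{2}}\le|a|+|b|$ estimate yields
\[
\length(\gamma)\le\int_I|\dot r|\,dt+\int_I r|\dot\theta|\,dt \;=\; D(\gamma)+\int_I r|\dot\theta|\,dt,
\]
so only the angular integral requires real work. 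Setting $r_k:=D(\gamma)\cdot 2^{-k}$ and $A_k:=\{r_{k+1}\le|x|\le r_k\}$, monotonicity of $r$ ensures that $\gamma$ visits each $A_k$ on a single sub-interval $I_k$, hence $\int r|\dot\theta|\,dt\le\sum_k r_k V_k$ where $V_k:=\int_{I_k}|\dot\theta|\,dt$. Since $\sum_k r_k=2D(\gamma)$, the theorem reduces to a uniform bound $V_k\le C$ independent of~$k$.

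The crux, and the main obstacle, is this uniform angular bound. The key geometric input is a \emph{radius-shrinking lemma} obtained by applying the self-contracted inequality to three times $s_0<s_1<s_2$ for which $\gamma(s_0)$ and $\gamma(s_2)$ lie on the same ray through the origin (i.e.\ $\theta(s_0)\equiv\theta(s_2)\pmod{2\pi}$); writing $\alpha\in[0,\pi]$ for the angle between that common ray and the ray through $\gamma(s_1)$,
\[
|\gamma(s_0)-\gamma(s_2)|=r(s_0)-r(s_2),\quad|\gamma(s_1)-\gamma(s_2)|^{2}=r(s_1)^{2}+r(s_2)^{2}-2r(s_1)r(s_2)\cos\alpha,
\]
and the inequality $|\gamma(s_0)-\gamma(s_2)|\ge|\gamma(s_1)-\gamma(s_2)|$ rearranges, in the extremal case $r(s_1)=r(s_2)$, into $r(s_0)\ge r(s_2)(1+2\sin(\alpha/2))$. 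The case $\alpha=\pi$ (with the continuous lift $\theta$ gaining exactly $2\pi$ from $s_0$ to $s_2$) already rules out full wrap-arounds within $A_k$, giving $\sup_{I_k}\theta-\inf_{I_k}\theta<2\pi$.

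To upgrade this one-step estimate into a bound on $V_k$, the plan is to describe the behaviour of $\theta$ on $I_k$ as a sequence of oscillations of amplitudes $\alpha_i$, pick within each oscillation a triple $(s_0^{(i)},s_1^{(i)},s_2^{(i)})$ of the form required by the lemma, and chain the resulting multiplicative estimates on $r$ to obtain $\prod_i(1+2\sin(\alpha_i/2))\le r(\inf I_k)/r(\sup I_k)\le 2$. Elementary comparisons (e.g.\ $\log(1+x)\ge x/2$ for $x\le 1$ and $\sin(x)\ge 2x/\pi$ on $[0,\pi/2]$) then turn this into a uniform bound on $\sum_i\alpha_i$, and hence on $V_k$. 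Summing geometrically across annuli and combining with the radial term, a careful bookkeeping of the constants then recovers $\length(\gamma)\le(8\pi+2)D(\gamma)$. The delicate point is precisely this bookkeeping: oscillations in $\theta$ need not be nested or have a uniform amplitude, so the selection of triples $(s_0^{(i)},s_1^{(i)},s_2^{(i)})$ must be arranged to be pairwise compatible with the lemma's hypothesis and to genuinely telescope the multiplicative radial estimate, which requires defining the oscillations through an appropriate sequence of stopping times based on consecutive "returns" of $\theta$ to previously visited angular values.
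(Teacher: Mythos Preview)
Your overall architecture---translate so $\gamma(b)=O$, use that $r(t)=|\gamma(t)|$ is nonincreasing, split into dyadic annuli, and separate a radial contribution from an angular one---parallels the paper's strategy (Sections~\ref{sec:v}--\ref{sec:proofs}). The radial part is fine and corresponds to the paper's ``vertical'' estimate. The genuine gap is the angular bound $V_k\le C$.

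Your radius-shrinking lemma is correct (indeed $r(s_1)\ge r(s_2)$ only strengthens the inequality, so $r(s_0)-r(s_2)\ge 2r(s_2)\sin(\alpha/2)$ holds without the ``extremal case'' caveat). But turning it into a bound on the \emph{total variation} $V_k=\int_{I_k}|\dot\theta|$ is exactly where the argument is missing, and the obvious implementations fail. If one takes triples $(s_0^{(i)},s_1^{(i)},s_2^{(i)})$ around successive local extrema of $\theta$, the intervals $[s_0^{(i)},s_2^{(i)}]$ in general neither chain ($s_2^{(i)}\le s_0^{(i+1)}$) nor nest, so the factors $1+2\sin(\alpha_i/2)$ do not multiply against the single ratio $r(\inf I_k)/r(\sup I_k)\le 2$. (For instance, with successive extremal values $0,3,1,4$ of $\theta$, the natural $s_2^{(1)}$ lies \emph{after} $s_0^{(2)}$.) The ``appropriate sequence of stopping times'' you allude to is precisely the missing idea, and without it you have only bounded the range $\sup\theta-\inf\theta<2\pi$, not the variation. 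There is also a regularity issue: $\gamma$ is merely continuous, so $\dot r,\dot\theta$ need not exist; one has to work with polygonal approximations or first establish rectifiability by other means.

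The paper handles the angular (``horizontal'') part by a different mechanism that avoids this combinatorics entirely: each horizontal segment $[p_i,q_i]$ is assigned, via a fixed-angle projection $\pi$ onto the inner circle $S(0,\lambda R)$, a circular arc $\pi([p_i,m_i])$ of comparable length (Lemma~\ref{lem:h}), and the self-contracted property is used to show these arcs are pairwise \emph{disjoint} (Lemma~\ref{lem:disj}). Disjointness immediately caps their total length by $2\pi\lambda R$, giving the $8\pi$ in the constant. Your proposal has no analogue of this disjointness, and the assertion that your bookkeeping ``recovers'' exactly $8\pi+2$ is not supported; even if the oscillation scheme could be made to telescope, the resulting constant would have no reason to coincide with the paper's.
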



Let us finally mention that, even if gradient orbits of convex
functions have finite length in the plane, they do not enjoy all the properties of the gradient orbits of real-analytic
functions. Indeed, on the one hand, the so-called Thom conjecture for the gradient orbits of real-analytic
functions holds true:
if $x_{\infty}$ denotes the limit of the orbit $\gamma(t)$, then the
secants $(\gamma(t)-x_{\infty})/||\gamma (t)-x_{\infty}||$ converge
towards a fixed direction of the unit sphere (see K.~Kurdyka,
T.~Mostowski and A.~Parusinski \cite{KMP2000}). On the other hand, as we show in
Section~\ref{subsec:2}, an analogous result fails in the convex case.
Indeed, the orbits of a convex gradient flow may turn around their
limit infinitely many times.
\smallskip

Our techniques only work in the two-dimensional case. We do not know
whether Theorem~\ref{thm-intro-convex} and Theorem~\ref{Theorem_main}
hold in greater dimension.
\smallskip

The article is organized as follows. In Section~\ref{sec:2}, we
present basic properties of self-contracted curves. In
Section~\ref{sec:v}, we decompose each polygonal approximation of a
bounded self-contracted curve in an annulus centered at its endpoint
into horizontal and vertical segments. We establish upper bounds on
the total length of the vertical segments in Section~\ref{sec:v} and
on the total length of the horizontal segments in
Section~\ref{sec:h}. The proof of the main result is presented in
Section~\ref{sec:proofs}. In Section~\ref{sec:6}, we show that the
orbits of various dynamical systems are self-contracted curves. Two
(counter)-examples are presented in Section~\ref{sec:ex}.

\bigskip

\textit{Notations}. Throughout the manuscript, we shall deal with
the finite-dimensional Euclidean space $\mathbb{R}^{n}$ equipped
with the canonical scalar product $\langle\cdot,\cdot\rangle$. We
denote by $\Vert\cdot\Vert$ (respectively, $\dist(\cdot,\cdot)$) the
corresponding norm (respectively, distance). Therefore, the distance
between two points $x$ and~$y$ of~$\R^2$ will be denoted by $\Vert
x-y\Vert,$ $\dist(x,y)$ or sometimes $|xy|.$ We also denote by
$\mathrm{dist\,}(x,S)$ the distance of a given point
$x\in\mathbb{R}^{n}$ to a set $S\subset \mathbb{R}^{n}$, by $B(x,r)$
the closed ball with center $x\in\mathbb{R}^{n}$ and radius $r>0$
and by $S(x,r)$ its boundary, that is, the sphere of the same center
and the same radius. For $0<r<R$, we denote by

\begin{equation} \label{annulus}
U(r,R):=\{ x\in\R^{n} \mid r < \| x \| \le R\}
\end{equation}
the annulus centered at the origin $O$ with outer radius $R$ and
inner radius $r$ and by $\Delta R = R-r$ its width. Let
$$[p,q]\,:=\,\{p+t(q-p) \mid t\in\lbrack0,1]\}$$
be the closed segment with endpoints $p,q\in\R^n$. A subset $S$ of
$\R^n$ is called convex, if $[p,q]\subset S$ for every $p,q\in S$.

\section{Self-contracted curves}\label{sec:2}


Throughout this paper, we shall deal with curves
$\gamma:\,I\,\rightarrow\mathbb{R}^{n}$, defined on an interval~$I$
of~$\R.$ We recall that the length of a curve $\gamma: I\to \R^n$ is
defined as
$$ \length(\gamma) = \,\sup\, \left\{
\sum_{i=1}^{k}\, \dist(\gamma(t_i),\gamma(t_{i+1})) \right\} $$
where the supremum is taken over all the finite
subdivisions~$\{t_{i}\}_{i=1}^{k+1}$ of~$I$.

We shall need the following definition.

\begin{definition}[Convergence of a curve]
A curve $\gamma:I \to \R^n$ is said to converge to a
point~\mbox{$x_0\in \R^n$} if $\gamma(t)$ converges to~$x_0$ when
$t$
goes to $t_+:=\sup I$. \\
A curve $\gamma:I \rightarrow\mathbb{R}^{n}$ is said to be
\emph{bounded}, if its image $\gamma(I)$ is a bounded subset of
$\mathbb{R}^{n}$.
\end{definition}

We start with an elementary property of self-contracted curves.

\begin{proposition}[Existence of left/right limits] \label{prop:cont}
Let $\gamma:I \mapsto\R^{n}$ be a bounded self-contracted curve and
$(a,b) \subset I$. Then, $\gamma$ has a limit in $\R^n$ when $t\in
(a,b)$ tends to an endpoint of~$(a,b)$. \\ In particular, every
self-contracted curve can be extended by continuity to the endpoints
of $I$ $($possibly equal to~$\pm\infty)$.
\end{proposition}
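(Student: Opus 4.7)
The two endpoints must be treated separately, since self-contraction only gives monotonicity of $t \mapsto \dist(\gamma(t),\gamma(c))$ when $t \le c$: it constrains how the curve approaches future points, but not how it emanates from the past. I would therefore split the argument according to whether we are approaching the right or the left endpoint of $(a,b)$.

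\textbf{Right endpoint.} By boundedness and Bolzano--Weierstrass, I first extract an increasing sequence $s_{n} \nearrow b$ in $(a,b)$ with $\gamma(s_{n}) \to x_{0}$ for some $x_{0} \in \R^{n}$. To upgrade this subsequential convergence to $\gamma(t) \to x_{0}$ as $t \to b^{-}$, I fix $\varepsilon > 0$, choose $N$ with $\|\gamma(s_{N}) - x_{0}\| < \varepsilon$, and for any $t \in (s_{N},b)$ and any $n$ with $s_{n} \ge t$, apply self-contraction to the triple $s_{N} \le t \le s_{n}$ to get
\[\dist(\gamma(s_{N}),\gamma(s_{n})) \ge \dist(\gamma(t),\gamma(s_{n})).\]
Sending $n \to \infty$ and using continuity of the distance yields $\|\gamma(t) - x_{0}\| \le \|\gamma(s_{N}) - x_{0}\| < \varepsilon$. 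This is the easy case.

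\textbf{Left endpoint.} Here a direct Cauchy-type argument fails, so I would exploit a different consequence of self-contraction. For each fixed $c \in (a,b)$, the map $t \mapsto \dist(\gamma(t),\gamma(c))$ is nonincreasing on $(a,c]$, hence admits a limit $\rho_{c} \in [0,+\infty)$ as $t \to a^{+}$. Given any two subsequential cluster values $x_{0}$ and $y_{0}$ of $\gamma$ at $a^{+}$ (which exist by boundedness), passing to the limit in the distances gives
\[\dist(x_{0},\gamma(c)) = \rho_{c} = \dist(y_{0},\gamma(c)) \qquad \text{for every } c \in (a,b).\]
The crux is then a self-referential choice of test point: I specialize $c = t_{n}$ along a sequence $t_{n} \searrow a$ with $\gamma(t_{n}) \to x_{0}$. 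The identity $\dist(x_{0},\gamma(t_{n})) = \dist(y_{0},\gamma(t_{n}))$ passes to the limit $n \to \infty$ as $0 = \|x_{0} - y_{0}\|$, forcing $x_{0} = y_{0}$. Hence all cluster values coincide, and since $\gamma$ is bounded this gives convergence at $a^{+}$.

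\textbf{Main obstacle.} The difficulty is entirely concentrated in the left endpoint. Self-contraction tells us that the curve moves ``into'' every future point but says nothing directly about how closely past values approximate one another, so monotonicity runs the wrong way for any naive Cauchy argument. The device of using the curve itself (near $a$) as the family of test points is what pins down uniqueness of the cluster value; everything else is routine passage to the limit.
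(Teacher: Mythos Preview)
Your argument is correct. The right-endpoint case is exactly the paper's proof. For the left endpoint you take a genuinely different route: the paper combines self-contraction with the triangle inequality to obtain, for $t_{1}\le t_{2}\le t_{3}$,
\[
\dist(\gamma(t_{1}),\gamma(t_{2})) \le \dist(\gamma(t_{1}),\gamma(t_{3})) + \dist(\gamma(t_{3}),\gamma(t_{2})) \le 2\,\dist(\gamma(t_{1}),\gamma(t_{3})),
\]
and then sandwiches an arbitrary $t$ near $a$ between two terms of the convergent subsequence, mirroring the right-endpoint argument with an extra factor~$2$. Your approach instead exploits the monotone limit $\rho_{c}=\lim_{t\to a^{+}}\dist(\gamma(t),\gamma(c))$ to show that \emph{every} cluster value lies on the sphere $S(\gamma(c),\rho_{c})$ for every $c$, and then lets $c$ run along a subsequence converging to one of the cluster values. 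The paper's trick is more quantitative and keeps the two endpoints in parallel; yours is more conceptual, avoids the triangle inequality entirely, and isolates exactly the information self-contraction provides at the left endpoint. Both are clean.
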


\begin{proof}
Since $\gamma$ lies in a compact set of~$\R^{n}$, there exists an
increasing sequence $\{t_{i}\}$ in $(a,b)$ with $t_{i} \to b$ such
that $\gamma(t_{i})$ converges to some point of~$\R^n$,
noted~$\gamma(b)^{+}$. Fix any $i,j \in \N^{*}$ and let $t_i<t<t_{i+j}.$
By~\eqref{eq:self}, we have
$$ \dist(\gamma(t),\gamma(t_{i+j}))
\leq \dist(\gamma(t_{i}),\gamma(t_{i+j})).$$ Letting $j$ go to infinity, we derive
$$ \dist(\gamma(t),\gamma(b)^{+})
\leq \dist(\gamma(t_{i}),\gamma(b)^{+})$$ which gives $\gamma(t)\to
\gamma(b)^+.$

\smallskip

Further, using the triangle inequality and
the inequality~\eqref{eq:self}, we have $$
\dist(\gamma(t_{1}),\gamma(t_{2})) \leq
\dist(\gamma(t_{1}),\gamma(t_{3})) +
\dist(\gamma(t_{3}),\gamma(t_{2})) \leq 2 \,
\dist(\gamma(t_{1}),\gamma(t_{3})). $$ Using this inequality, we can
show, as previously, that $\gamma(t)$ converges as $(a,b)\ni t \to
a$.

\smallskip

The last part of the assertion is straightforward.
\end{proof}

The following result is a straightforward consequence of
Proposition~\ref{prop:cont}.

\begin{corollary}[Convergence of bounded self-contracted curves]\label{corollary-convergence}
Every bounded self-contracted curve $\gamma:(0,+\infty)\to\R^n$
converges to some point $x_0 \in \R^2$ as $t\to+\infty$. Moreover,
the function $t\mapsto\dist(x_0,\gamma(t))$ is nonincreasing.
\end{corollary}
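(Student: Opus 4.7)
The plan is to derive both statements directly from Proposition~\ref{prop:cont}, since the corollary is announced as a straightforward consequence of it. For the first assertion, I would simply apply the proposition with the interval $(a,b) = (0,+\infty)$: boundedness of $\gamma$ together with the self-contracted property guarantees the existence of a limit as $t \to b = +\infty$, which we call $x_0$. (The fact that the ambient space is $\R^n$ and the statement writes $x_0 \in \R^2$ is harmless; the planar case is the one of interest in the sequel.)

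For the monotonicity statement, I would fix $t_1 \leq t_2$ in $(0,+\infty)$ and pick any increasing sequence $s_k \to +\infty$ with $s_k \geq t_2$. Applying the self-contracted inequality~\eqref{eq:self} with the triple $t_1 \leq t_2 \leq s_k$ gives
\begin{equation*}
\dist(\gamma(t_1),\gamma(s_k)) \geq \dist(\gamma(t_2),\gamma(s_k)).
\end{equation*}
Since $\gamma(s_k) \to x_0$ by the first part, the continuity of the distance function allows us to pass to the limit as $k \to +\infty$, yielding $\dist(\gamma(t_1),x_0) \geq \dist(\gamma(t_2),x_0)$. This is exactly the nonincreasing property of $t \mapsto \dist(x_0,\gamma(t))$.

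There is no real obstacle here: the only thing to notice is that Proposition~\ref{prop:cont} is formulated for an open subinterval $(a,b) \subset I$, so one must either apply it on, say, $(1,+\infty)$ and argue separately for small $t$ using boundedness and the self-contracted property (which in fact forces $t \mapsto \dist(\gamma(t),\gamma(s))$ to be well-behaved), or simply invoke the ``in particular'' clause of the proposition that extends $\gamma$ continuously to the endpoints of $I$. Either way, the limit $x_0$ exists and the monotonicity follows from one application of~\eqref{eq:self} together with a limit passage.
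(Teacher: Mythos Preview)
Your proof is correct and follows exactly the argument the paper has in mind: the paper gives no explicit proof and merely states that the corollary is a straightforward consequence of Proposition~\ref{prop:cont}, and your derivation (apply the proposition for the limit, then pass to the limit in~\eqref{eq:self} along $s_k\to+\infty$ for the monotonicity) is precisely that straightforward consequence. Your technical caveat about $(a,b)\subset I$ is unnecessary, since one may take $(a,b)=I=(0,+\infty)$ directly.
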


In the sequel, we shall assume that every self-contracted curve
$\gamma:I\mapsto\R^n$ is (defined and) continuous at the endpoints
of $I$.

\begin{remark}[Basic properties]\label{remark-basic} \ \\
(i) Inequality~\eqref{eq:self} shows that the image of a segment
$(a,b)$ by a self-contracted curve $\gamma$ lies in a ball of
radius~$\rho:=\dist(\gamma(a),\gamma(b))$. \smallskip

\noindent (ii) A self-contracted curve might not be (left/right)
continuous. A simple example is provided by the following planar
self-contracted curve: $$ \gamma(t)=\left\{
\begin{array}
[c]{ll} (t,1) & \quad\text{if }\,t\in(-\infty,0)\\ (0,0) &
\quad\text{if }t=0\\ (t,-1) & \quad\text{if }\,t\in(0,+\infty)
\end{array}
\right. $$

\noindent (iii) If $t\in (a,b)\mapsto \gamma (t)$ is a
self-contracted curve, then the curve $t\in (a,b)\mapsto \gamma
(a+b-t)$ is not necessarily self-contracted.\smallskip

\noindent (iv) Corollary~\ref{corollary-convergence} reveals that
the trajectories of a general gradient system
$$\dot{\gamma}(t)=-\nabla f(\gamma(t)), \qquad
\gamma(0)=x_{0}\in\R^n$$ might fail to be self-contracted curves.
Indeed in \cite[page 14]{PalDem82} an example of a $C^{\infty}$
function $f:\R^2\to\R$ is given, for which all trajectories of its
gradient system are bounded but fail to converge. \smallskip

\noindent (v) In Section~\ref{sec:6}, we show that whenever $f$ is
(quasi)convex, the gradient trajectories are self-contracted curves.
Thus, bounded self-contracted curves might fail to converge for the
strong topology in a Hilbert space (see Baillon's example in
\cite{baillon78}).
\end{remark}

From now on, we restrict ourselves to the two-dimensional case, and
study the asymptotic behaviour of self-contracted planar curves.

\section{Horizontal and Vertical directions} \label{sec:v}

In this section, we introduce a binary-type division of planar
segments into horizontal and vertical ones. We shall apply this
decomposition for segments issued from polygonal line approximations
of a bounded self-contracted curve. In this section, we derive an
upper bound on the total length of the vertical segments, while in
the next section we shall do the same for the total length of the
horizontal ones. Combining both results we shall thus obtain an
upper bound estimation on the total length of a bounded
self-contracted curve, establishing
Theorem~\ref{Theorem_main}.\smallskip

Fix $0<r<R$ and let $U(r,R)$ be the annulus defined in
\eqref{annulus}. Let $\sigma$ be a segment of~$U(r,R)$, not reduced
to a point. Denote by $p$ and~$q$ the endpoints of~$\sigma$ and
by~$m$ its midpoint. Switching $p$ and~$q$ is necessary, we can
assume that $q$ is closer to the origin~$O$ than~$p$, that is
$\dist(O,q)\le\dist(O,p)$. Let $\widehat{Omq}:= \theta$ be the angle
between the vectors $\overrightarrow{mO}$ and $\overrightarrow{mq}$,
\cf~Fig.~\ref{fig:1}. Note that $\theta \in
[-\frac{\pi}{2},\frac{\pi}{2}]$ (by convention, inverse-clockwise
angles are positive). \\

\setlength\unitlength{1pt} \noindent \begin{figure}[htbp] \noindent
\begin{picture}(370,170)(45,0)
\put(80,0){$\includegraphics[height=150pt]{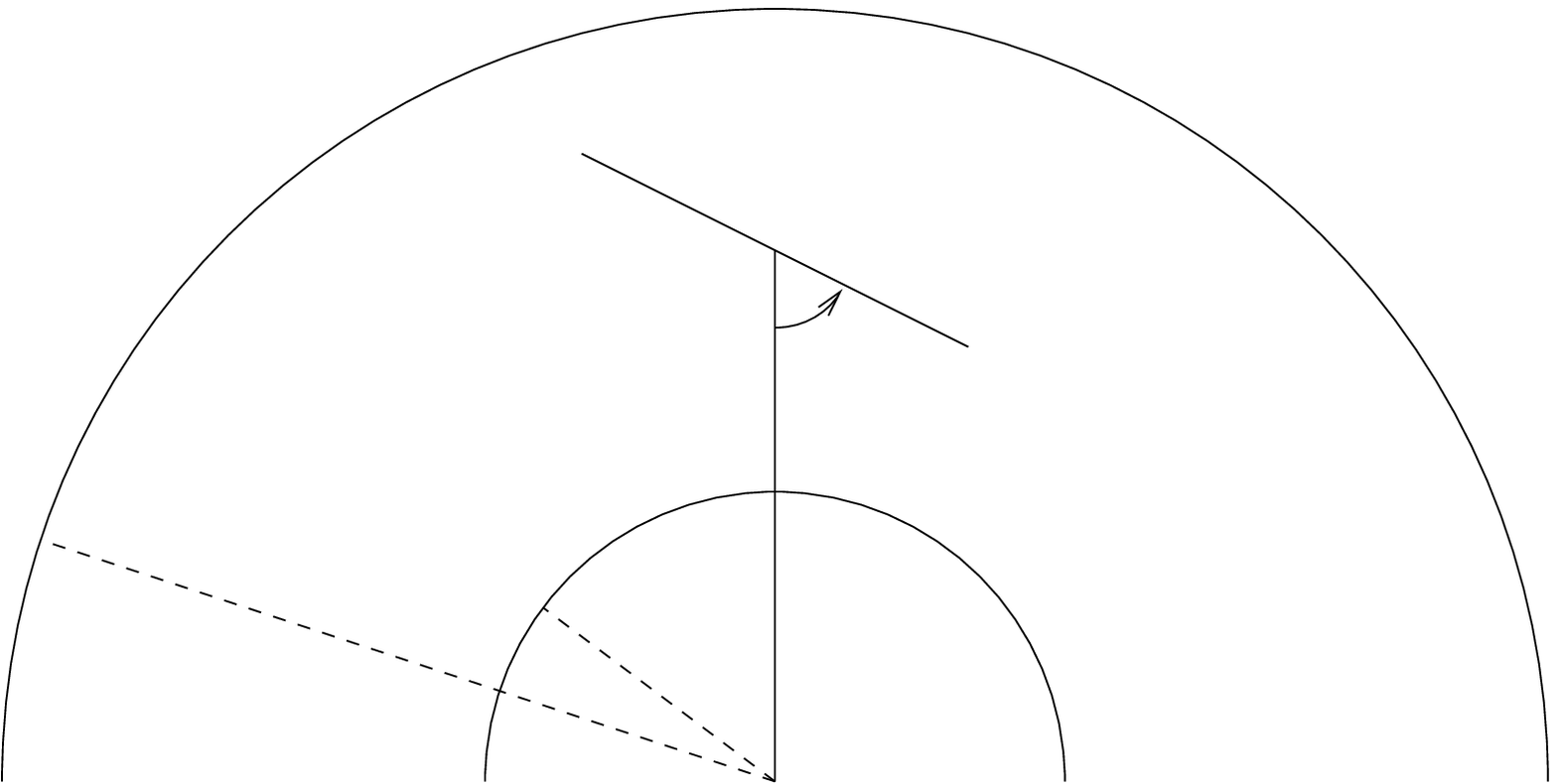}$}
\put(235,-3){$O$} \put(205,25){$r$} \put(120,20){$R$}
\put(270,90){$q$} \put(238,80){$\theta$} \put(230,110){$m$}
\put(190,130){$p$}
\end{picture}
\caption{} \label{fig:1}
\end{figure}

\begin{lemma}[Segment length estimate] \label{lem:theta}
Let $\sigma$ be a segment of~$U(r,R)$ with endpoints $p$ and~$q$
such that $\theta \neq \pm \frac{\pi}{2}$. Then $$ \length(\sigma)
\leq\, \frac{2}{\cos \theta}\, |\dist(O,p)-\dist(O,q)|. $$
\end{lemma}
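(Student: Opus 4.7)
The plan is to apply the law of cosines in each of the two triangles $Omp$ and $Omq$, which share the side $Om$ but whose angles at $m$ are supplementary, and then to exploit the annulus inclusion $\sigma\subset U(r,R)$ to bound one of the resulting terms. Set $a=\dist(O,p)$, $b=\dist(O,q)$, $d=\dist(O,m)$, and $L=\length(\sigma)$. Since $m$ is the midpoint of $\sigma$, we have $|mp|=|mq|=L/2$; and since $p$ and $q$ lie on opposite sides of $m$ on the line supporting $\sigma$, the angle at $m$ in the triangle $Omp$ equals $\pi-\theta$, while the angle at $m$ in $Omq$ equals $\theta$ by definition. The law of cosines then gives
\begin{align*}
b^2 &= d^2 + L^2/4 - dL\cos\theta, \\
a^2 &= d^2 + L^2/4 + dL\cos\theta.
\end{align*}
Subtracting these two identities cancels the common $d^2+L^2/4$ term and produces the clean relation $a^2-b^2 = 2\,d\,L\cos\theta$. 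The hypotheses $b\leq a$ and $\theta\neq\pm\pi/2$ ensure $\cos\theta>0$, so this can be rearranged into the key formula
\[
L \;=\; \frac{(a-b)(a+b)}{2\,d\,\cos\theta}.
\]

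The desired estimate $L\leq 2(a-b)/\cos\theta$ is thus equivalent to the purely geometric inequality $a+b\leq 4d$. This is where the annulus inclusion enters: since the three points $p$, $q$, $m$ all belong to $\sigma\subset U(r,R)$, one has $a,b\leq R$ and $d>r$, hence $a+b\leq 2R\leq 4r<4d$ as soon as $R\leq 2r$, which is the natural regime in which this lemma will eventually be applied (namely, to sub-annuli arising from a dyadic decomposition around the endpoint of the curve in Section~\ref{sec:proofs}). Plugging this bound on $a+b$ into the formula for $L$ yields the claim.

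The main obstacle here is not depth but packaging: one must spot the supplementary-angle structure at the midpoint $m$, because it is precisely what causes the two law-of-cosines identities to subtract into a formula linear in $L$, with the $d^2$ and $L^2/4$ contributions gone. Once that identity is at hand, the annulus geometry enters only through the elementary bound $a+b\leq 4d$, and the factor $2$ in the lemma's statement is exactly the factor inherited from the ratio $(a+b)/(2d)\leq 2$.
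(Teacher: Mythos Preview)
Your law-of-cosines computation is correct and the resulting identity $L=\dfrac{(a-b)(a+b)}{2d\cos\theta}$ is clean; the lemma is then equivalent to $a+b\le 4d$. The gap is in your justification of this last inequality. You impose the extra hypothesis $R\le 2r$, which is not part of the statement, and your claim that this is the regime of application is wrong: the lemma is invoked for segments in the annulus $A=U(\lambda R,R)$ with $\lambda\in(\sin\alpha,1)$, and in Proposition~\ref{prop:DR} one lets $\lambda\to 0$, the opposite of $\lambda\ge\tfrac12$. Worse, $a+b\le 4d$ (and hence the lemma as literally stated) can fail: take $p=(-10,\varepsilon)$, $q=(10,0)$ with $\varepsilon>0$ small, so $a,b\approx 10$ while $d=|Om|\approx\varepsilon/2$; here $L\approx 20$ but $2(a-b)/\cos\theta\approx 2\varepsilon$. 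The segment sits in $U(r,R)$ for any $r<\varepsilon/2$ and $R>10$, so the annulus hypothesis alone does not save you.

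The paper's own argument is different: it projects $p$ orthogonally onto the line $Om$, obtaining $a-d\ge\tfrac12 L\cos\theta$ directly (no law of cosines, no factor $a+b$), and then concludes via $b\le d$. That last inequality is also not true for arbitrary segments (same counterexample), but it \emph{does} hold in the only place the lemma is used, namely Lemma~\ref{lem:V}: there the segments come from the polygonal approximation of Definition~\ref{def:approx}, in which for vertical segments $q_i$ is explicitly chosen to be the point of $\sigma_i$ closest to the origin, so in particular $b=|Oq|\le|Om|=d$. Your factorisation $(a-b)(a+b)$ does not exploit this structural fact, which is why you are forced to reach for an unrelated hypothesis on the annulus; the projection route uses it and closes without any condition on $r$ and $R$.
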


\begin{proof}
Let $\bar{p}$ be the orthogonal projection of~$p$ to the line~$Om$.
Using elementary trigonometry in the right-angled
triangle~$p\bar{p}m$, we derive $$ \dist(m,\bar{p}) = \frac{1}{2}
\cos \theta \cdot \length(\sigma). $$ Hence, $$ \dist(O,p) -
\dist(O,m) \geq \frac{1}{2} \cos \theta \cdot \length(\sigma).$$
Since $\dist(O,q) \leq \dist(O,m)$, the conclusion follows.
 \end{proof}

Fix $\alpha \in (0,\frac{\pi}{2})$, let $\lambda\in (0,1)$ be such
that $\sin \alpha < \lambda < 1$, and set $r:=\lambda R$. Denote by
$$A:=U(\lambda R,R)$$ the corresponding annulus of \eqref{annulus},
with width equal to $\Delta R = (1-\lambda) R$. We now introduce a
crucial definition in the proof of our main result.

\begin{definition}[Classification of the
segments]\label{Def-classification} Let $\alpha \in
(0,\frac{\pi}{2})$, $\lambda\in (0,1)$ and $A$ as above. A
nontrivial segment~$\sigma$ of~$A$ is said to be \smallskip
\begin{itemize}
\item {\em vertical}, if $\theta$ lies in~$(-\frac{\pi}{2}+\alpha,\frac{\pi}{2}-\alpha)$
; \smallskip
\item {\em horizontal, pointing in the positive direction,} if $\theta$ lies in $[-\frac{\pi}{2},-\frac{\pi}{2}+\alpha]$
;\smallskip
\item {\em horizontal, pointing in the negative direction,} if $\theta$ lies in $[\frac{\pi}{2}-\alpha,\frac{\pi}{2}]$.
\end{itemize}
\end{definition}

For instance, the segment $[p,q]$ in~Fig.~\ref{fig:1} points in the
negative direction.

\begin{definition}[Polygonal approximation] \label{def:approx}
Let $\gamma:I\to \R^2$ be a continuous self-contracted planar curve
converging to the origin~$O$. We consider
polygonal approximations $\{\sigma_{i}\}_{i=1}^{k+1}$ of~$\gamma$ in the annulus
$A$, as follows: Let  $t_{1} < t_{2} < \cdots < t_{k+1}$ be a
sequence of points of~$I$ with $\gamma(t_{i}) \neq \gamma(t_{i+1})$
such that the restriction of~$\gamma$ to~$[t_{1},t_{k+1}]$ lies
in~$A$. Refining the subdivision if necessary, we can further assume
that for every $i\in\{1,\dots,k\}$ the segment~$\sigma_{i}$ with
endpoints~$p_{i}=\gamma(t_{i})$ and~$q_{i}=p_{i+1}=\gamma(t_{i+1})$
lies in~$A$ and that the length of~$\sigma_{i}$ is within any
desired precision~$\eta>0$ of the length of~$\gamma_{|[t_{i},t_{i+1}]}$ (this precision $\eta>0$ will be defined
at the beginning of Section~\ref{sec:h} and will only depend on
$\alpha$, $\lambda$ and~$R$). Since the function $t \mapsto
\dist(O,\gamma(t))$ is nonincreasing
(\cf~Corollary~\ref{corollary-convergence}), we can further assume
that, if $\sigma_i:=[p_i,q_i]$ is vertical, then $q_i$ is the
closest point of $\sigma_i$ to the origin. We denote
by~$m_{i}:=\frac{1}{2}(p_i+q_i)$ the midpoint of~$\sigma_{i}$.
\end{definition}

\begin{remark} \label{rmq-non-sc}
It is worth noticing that the polygonal approximation of a
self-contracted curve introduced above is no more a self-contracted
curve in general. Nevertheless, one still has
$$
\dist(p_i,p_l) \geq \dist(p_j,p_l) \quad {\rm when} \ 1\leq i\leq
j\leq l\leq k+1,
$$
which is the property we will use.
\end{remark}

The total length of the vertical segments satisfies the following
upper bound.

\begin{lemma}[Total vertical length upper bound] \label{lem:V}
If $\{\sigma_{i}\}_{i=1}^{k+1}$ is a polygonal approximation of
$\gamma$ in the annulus $A$, \cf~Definition~\ref{def:approx}, then
$$ \sum_{i \in \mathcal{V}} \length(\sigma_{i}) \leq
\frac{2}{\cos \alpha} \, \Delta R $$ where the sum is taken over all
indices $i\in \mathcal{V}\subset\{1,\dots,k+1\}$ corresponding to
the vertical segments.
\end{lemma}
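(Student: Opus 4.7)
My plan is to apply the segment-length estimate of Lemma~\ref{lem:theta} to each vertical segment individually, and then exploit the monotonicity of $t\mapsto \dist(O,\gamma(t))$ (Corollary~\ref{corollary-convergence}) to collapse the resulting inequalities into a telescoping sum controlled by the width of the annulus.

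First, for any vertical segment $\sigma_i=[p_i,q_i]$ with $i\in \mathcal{V}$, Definition~\ref{Def-classification} gives $|\theta_i|<\pi/2-\alpha$, so $\theta_i\neq \pm\pi/2$ and $\cos\theta_i$ is uniformly bounded away from zero by a positive constant depending only on $\alpha$. Lemma~\ref{lem:theta} therefore applies and yields
$$
\length(\sigma_i)\le \frac{2}{\cos\alpha}\bigl(\dist(O,p_i)-\dist(O,q_i)\bigr).
$$
Here I have dropped the absolute value since by construction of the polygonal approximation $q_i=\gamma(t_{i+1})$ is closer to the origin than $p_i=\gamma(t_i)$; this is precisely the monotonicity of $t\mapsto \dist(O,\gamma(t))$ established in Corollary~\ref{corollary-convergence}, which makes the convention of Definition~\ref{def:approx} automatic along the whole polygon.

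Next, I would sum the estimate above over $i\in\mathcal{V}$. Because every difference $\dist(O,p_i)-\dist(O,q_i)$ is nonnegative and because $p_{i+1}=q_i$, enlarging the index set from $\mathcal{V}$ to $\{1,\dots,k\}$ can only increase the sum, and the enlarged sum telescopes:
$$
\sum_{i\in\mathcal{V}}\bigl(\dist(O,p_i)-\dist(O,q_i)\bigr)\le \sum_{i=1}^{k}\bigl(\dist(O,p_i)-\dist(O,p_{i+1})\bigr) = \dist(O,p_1)-\dist(O,p_{k+1}).
$$
Since both $p_1$ and $p_{k+1}$ lie in the annulus $A=U(\lambda R,R)$, the right-hand side is at most $R-\lambda R=\Delta R$. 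Multiplying by $2/\cos\alpha$ produces the desired bound.

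I do not anticipate any real obstacle here: the argument is essentially Lemma~\ref{lem:theta} combined with the trigonometric bound coming from verticality and the monotonicity-driven telescoping. The only point worth double-checking is that the labeling convention of Definition~\ref{def:approx} yields $q_i=\gamma(t_{i+1})=p_{i+1}$ on every segment, so that the successive differences chain up without gap; this is precisely what Corollary~\ref{corollary-convergence} guarantees.
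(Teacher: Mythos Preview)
Your argument is exactly the paper's: apply Lemma~\ref{lem:theta} to each vertical segment, then telescope via $q_i=p_{i+1}$ and the monotonicity of $t\mapsto\dist(O,\gamma(t))$ to bound the sum of radial drops by~$\Delta R$. One small slip (which the paper's own proof shares, since it asserts $|\theta_i|<\alpha$): from the verticality condition $|\theta_i|<\tfrac{\pi}{2}-\alpha$ you only obtain $\cos\theta_i>\cos(\tfrac{\pi}{2}-\alpha)=\sin\alpha$, so Lemma~\ref{lem:theta} literally gives the factor $2/\sin\alpha$ rather than $2/\cos\alpha$; the structure of the argument is unaffected.
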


\begin{proof} Let $\theta_i$ denote the angle between
$\overrightarrow{m_iO}$ and $\overrightarrow{m_iq_i}$. Since
$i\in\mathcal{V}$, it follows that $|\theta_i|<\alpha$, whence
$(\cos \theta_i)^{-1}<(\cos \alpha)^{-1}$. From
Lemma~\ref{lem:theta} (segment length estimation), we obtain
$$\sum_{i \in \mathcal{V}} \length(\sigma_{i})< \frac{2}{\cos
\alpha} \, \sum_{i \in \mathcal{V}}  \dist(O,p_{i}) -
\dist(O,q_{i}).$$

Since $\dist(O,p_{i})>\dist(O,q_i)$, for all $i\in\{1,\dots,k\}$ we
deduce $$ \sum_{i \in \mathcal{V}} \dist(O,p_{i}) - \dist(O,q_{i})
\leq \sum_{i=1}^{k} \dist(O,p_{i}) - \dist(O,q_{i}).$$ Now, since
$q_{i}=p_{i+1}$, the right-hand term is equal to
$\dist(O,p_{1})-\dist(O,q_{k})$, which is less or equal to the width
$\Delta R$ of~$A$. The proof is complete.
\end{proof}

\section{Length estimate for horizontal directions} \label{sec:h}

In this section, we keep the notations and the definitions from the
previous section. In particular, \begin{equation}\label{lambda}
\alpha\in (0,\frac{\pi}{2}),\qquad\sin \alpha<\lambda<1, \qquad
A:=U(\lambda R,R)
\end{equation}
and $\{\sigma_{i}\}_{i=1}^{k+1}$ is a polygonal approximation of
$\gamma$ in the annulus $A$, \cf~Definition~\ref{def:approx}.

We establish an upper bound on the total length of the horizontal
segments issued from the polygonal approximation of
$\gamma$. \smallskip

Let $x \in A$. The distance from the origin to the half-line~$L_{x}$
passing through~$x$ and making an angle~$\alpha>0$
with~$\overrightarrow{xO}$ is equal to $\sin \alpha \cdot
\dist(O,x)$. Thus, the half-line~$L_{x}$ intersects the
circle~$S(0,\lambda R)$ of radius~$\lambda R$ centered at the origin
at two points. These two points are noted $\pi(x)$ and~$\pi'(x)$,
with $\pi(x)$ closer to~$x$ than~$\pi'(x)$. Furthermore, the
half-line~$L_{x}$ intersects~$A$ along two segments~$\Delta_{x}$
and~$\Delta'_{x}$, where the endpoints of~$\Delta_{x}$ agree
with~$x$ and~$\pi(x)$, and one of the endpoints of~$\Delta'_x$
agrees with~$\pi'(x)$. Note that $\mathrm{min}_{x\in
A}\parallel\pi(x)-\pi'(x)\parallel>\delta>0$. The half-line~$L_{x}$
extends to a line which bounds a (closed) half-plane~$H_{x}$
containing the origin of the plane. Denote by~$\overline{H_{x}^{c}}$
the (closed) half-plane with the same boundary as $H_{x}$, not
containing the origin (see Fig.~\ref{fig:2} for an illustration of
these notations). The mappings $x\mapsto\pi(x)$ and
$x\mapsto\pi'(x)$ from~$A$ to~$S(0,\lambda R)$ are clearly
continuous, thus also uniformly continuous. Therefore, there
exists~$\eta > 0$ such that for every pair of points $x,y \in A$
which are $\eta$-close from each other (\ie \, $\dist(x,y)<\eta$),
we have
\begin{eqnarray*}
\dist(x,\pi(y))<\dist(x,\pi'(y)), \quad
\dist(\pi(x),\pi(y))<\dist(\pi(x),\pi'(x)),
\end{eqnarray*}
\begin{eqnarray}\label{antipodal}
\text{and} \quad \dist(\pi(x),\pi(y))<\dist(\pi(x),\pi'(y)).
\end{eqnarray}

\setlength\unitlength{1pt} \noindent \begin{figure}[htbp] \noindent
\begin{picture}(370,230)(45,0)
\put(130,0){$\includegraphics[height=210pt]{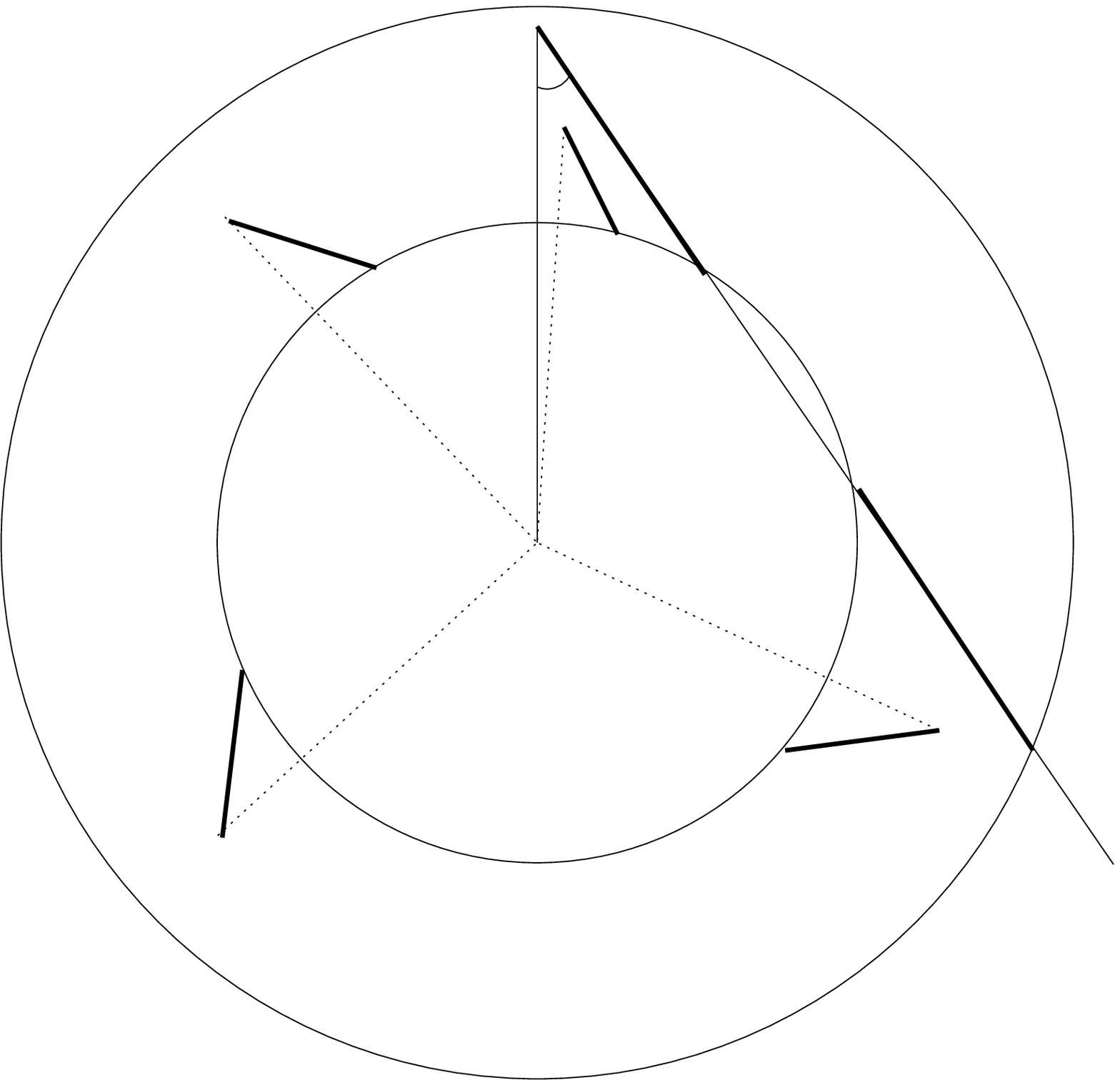}$}
\put(240,105){$O$} \put(225,203){$x$} \put(258,182){$\Delta_x$}
\put(273,160){$\pi(x)$} \put(300,118){$\pi'(x)$}
\put(319,92){$\Delta'_x$} \put(345,55){$H_x^c$} \put(348,35){$L_x$}
\put(325,35){$H_x$} \put(165,169){$y$} \put(189,169){$\Delta_y$}
\put(120,27){$S(0,R)$} \put(245,33){$S(0,\lambda R)$}
\end{picture}
\caption{} \label{fig:2}
\end{figure}

We shall further need the following technical lemmas.

\begin{lemma}[Essential disjointness of $\Delta_x$, $\Delta_y$] \label{lem:nointersec}
Let $x$ and $y$ be two distinct points of~$A$ such that
$\dist(O,x)>\dist(O,y)$. If $y \in H_{x}$, then the
segment~$\Delta_{y}$ lies in~$H_{x}$. Furthermore, $\Delta_{y}$ does
not intersect~$\Delta_{x}$, except possibly at~$y$.
\end{lemma}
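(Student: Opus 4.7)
The plan is to set up Cartesian coordinates with the origin at $O$ and $x=(a,0)$ on the positive $X$-axis, where $a=|Ox|$. With a convention fixing the side on which the angle $\alpha$ is swept at $x$, the line supporting $L_x$ has equation $X\sin\alpha+Y\cos\alpha=a\sin\alpha$, so $H_x=\{X\sin\alpha+Y\cos\alpha\le a\sin\alpha\}$. Writing $y=b(\cos\psi,\sin\psi)$ with $b=|Oy|<a$, the hypothesis $y\in H_x$ becomes $b\sin(\psi+\alpha)\le a\sin\alpha$.

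For the first assertion, since $H_x$ is convex and $y\in H_x$, it suffices to show $\pi(y)\in H_x$. Parametrising $L_y$ analogously, the closer intersection of $L_y$ with $S(O,\lambda R)$ occurs at parameter $t^{*}=b\cos\alpha-s_b$, where $s_b:=\sqrt{\lambda^{2}R^{2}-b^{2}\sin^{2}\alpha}\ge 0$, and a direct computation yields
\[
\pi(y)\cdot(\sin\alpha,\cos\alpha)\;=\;b\sin\alpha\cos\psi+s_b\sin\psi.
\]
So I need $b\sin\alpha\cos\psi+s_b\sin\psi\le a\sin\alpha$. I split on the sign of $\sin\psi$. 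If $\sin\psi\le 0$, then $s_b\sin\psi\le 0$ and the left-hand side is at most $b\sin\alpha\cos\psi\le b\sin\alpha<a\sin\alpha$. If $\sin\psi>0$, the inequality $b\ge\lambda R$ gives $s_b\le b\cos\alpha$, so $s_b\sin\psi\le b\cos\alpha\sin\psi$ and the left-hand side is at most $b\sin\alpha\cos\psi+b\cos\alpha\sin\psi=b\sin(\psi+\alpha)\le a\sin\alpha$ by hypothesis. In either case $\pi(y)\in H_x$, so $\Delta_y=[y,\pi(y)]\subset H_x$ by convexity.

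For the second assertion, let $z\in\Delta_y\cap\Delta_x$. Then $z\in\Delta_x\subset L_x=\partial H_x$ and $z\in\Delta_y\subset H_x$, so $z$ sits on the bounding line of the half-plane containing $\Delta_y$. Either $\Delta_y\subset L_x$---which would force $L_y=L_x$, easily ruled out since $L_y\parallel L_x$ requires $\sin(\psi+2\alpha)=0$ and then $y\in L_x$ forces $b=a$, contradicting $b<a$---or $z$ is an endpoint of $\Delta_y$. If $z=y$ we are done; otherwise $z=\pi(y)\in L_x\cap S(O,\lambda R)=\{\pi(x),\pi'(x)\}$, and since $z\in\Delta_x=[x,\pi(x)]$ while $\pi'(x)\in\Delta'_x$ is disjoint from $\Delta_x$, we conclude $\pi(y)=\pi(x)$.

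It remains to show that $\pi(y)=\pi(x)$ forces $y=\pi(x)$---this is the technical heart of the argument. The equality $\pi(y)=\pi(x)$ says that $L_y$ passes through $\pi(x)$ as its nearer intersection with $S(O,\lambda R)$; applying the law of cosines to the triangle $Oy\pi(x)$ (with angle $\alpha$ at $y$, $|Oy|=b$, $|O\pi(x)|=\lambda R$) one gets $\psi=\phi\pm(B-\alpha)$, where $\phi$ is the angular position of $\pi(x)$ and $B:=\arcsin(b\sin\alpha/(\lambda R))$. Substituting this into $b\sin(\psi+\alpha)$ and applying product-to-sum identities, one obtains
\[
b\sin(\psi+\alpha)-a\sin\alpha\;=\;\frac{\lambda R}{\sin\alpha}\,\sin(B-\alpha)\cdot\sin\theta,
\]
where $\theta$ is $A-B$ or $A+B$ (with $A:=\phi+\alpha$); in either case $\sin\theta\ge 0$ on the admissible range $B\in[\alpha,A]$. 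Hence the left-hand side is non-negative, and the constraint $y\in H_x$ (which says this difference is $\le 0$) forces $B=\alpha$, i.e.\ $b=\lambda R$, so $y\in S(O,\lambda R)$ and $y=\pi(y)=\pi(x)$. Thus $z=y$, completing the proof. The trigonometric bookkeeping in this last step is the principal obstacle; everything else is a clean use of convexity and parametric geometry of the annulus.
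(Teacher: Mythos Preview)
Your approach is genuinely different from the paper's. The paper argues synthetically: it first treats the case $y\in\Delta_x$ by noting that the angle $\widehat{Oz\pi(x)}$ increases as $z$ moves along~$\Delta_x$, then reduces the general case to this one by the rotation about~$O$ carrying $y$ onto~$\Delta_x$, and finally disposes of a residual intersection with~$\Delta'_x$ by a sign-of-angle argument. You instead fix coordinates and compute directly. Your proof of the first assertion is correct and pleasantly short; the key identity $\pi(y)\cdot(\sin\alpha,\cos\alpha)=b\sin\alpha\cos\psi+s_b\sin\psi$ together with $s_b\le b\cos\alpha$ does the job.

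For the second assertion your reduction is sound, with two caveats. First, the parallelism condition you state is not right: the supporting lines of $L_x$ and $L_y$ have the same distance to~$O$ precisely when $a\sin\alpha=b\sin\alpha$, so they can never coincide because $b<a$; no trigonometric identity on $\psi$ is needed. Second, and more importantly, your ``technical heart'' paragraph is both under-justified and unnecessary. Your own first-part computation already shows the inequality for $\pi(y)$ is \emph{strict}: since $y\in A$ gives $b>\lambda R$, one has $s_b<b\cos\alpha$ strictly, so in the case $\sin\psi>0$ the bound becomes $\pi(y)\cdot(\sin\alpha,\cos\alpha)<b\sin(\psi+\alpha)\le a\sin\alpha$; and in the case $\sin\psi\le 0$ one gets $\pi(y)\cdot(\sin\alpha,\cos\alpha)\le b\sin\alpha<a\sin\alpha$ directly from $b<a$. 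Hence $\pi(y)$ lies in the \emph{open} half-plane interior of~$H_x$ and therefore cannot equal $\pi(x)\in\partial H_x$. This immediately rules out $z=\pi(y)$ and finishes the proof, with no law of cosines, no auxiliary angles $A,B,\phi,\theta$, and no unverified sign claims. I would replace the last paragraph by this two-line observation.
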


\begin{proof}
Suppose first that $y$ lies in~$\Delta_{x}$. One easily checks that
the angle~$\widehat{Oz\pi(x)}$ increases  when the point~$z$ moves
from~$x$ to~$\pi(x)$ along~$\Delta_{x}$. In particular, the
angle~$\widehat{Oy\pi(x)}$ is greater than~$\alpha$. Therefore, the
segment~$\Delta_{y}$ lies in~$H_{x}$ and meets~$\Delta_x$ only
at~$y$. \smallskip

Suppose now that $y\notin \Delta_x$ and (towards a contradiction)
that $\Delta_{y}$ intersects~$\Delta_{x}$ at~$z \neq y$. Let $y'$
denote the intersection point of~$\Delta_{x}$ with the circle of
radius~$|Oy|$ centered at the origin. Then, the image of~$\Delta_{y}$
by the rotation around the origin taking~$y$ to~$y'$  does not lie
in~$H_{x}$ (the image of $z$ should lie in $H_x^c$). On the other
hand, this image agrees with~$\Delta_{y'}$ since the rotation sends
the ray $Oy$ to~$Oy'$. From the previous discussion, we conclude
that the image of~$\Delta_{y}$ is contained in~$H_{x}$. Hence a
contradiction.\smallskip

Finally, suppose that $y\notin \Delta_x$,
$\Delta_y\cap\Delta_x=\emptyset$ and $\Delta_{y}$
intersects~$\Delta_{x}'$ at~$z$. If $z=\pi'(x),$ then obviously
$\Delta_{y}$ lies in $H_x.$ Suppose now that $z\not=\pi'(x)$ (thus
$z\not= \pi(y)$). Since the angle~$\widehat{Oz\pi(y)}$ is positive
while the angle~$\widehat{Oz\pi'(x)}$ is negative, we obtain
that~$\widehat{\pi'(x)z\pi(y)}$ is positive which is not possible.
The proof is complete.
\end{proof}

\begin{lemma} [Injectivity of $\pi$] \label{lem:hc}
Let $\sigma:=[p,q]$ be an horizontal segment of~$A$, with midpoint
$m$, pointing in the positive direction. Assume $\dist(O,q)\le
\dist(O,p)$. Then,\smallskip
\begin{enumerate}
\item the restriction of~$\pi$ to~$\sigma$ is injective;\smallskip
\item if the length of~$\sigma$ is  at most~$\eta$, then the circular arc~$\pi([p,m])$ lies in~$\overline{H_{m}^{c}}$.
\end{enumerate}
\end{lemma}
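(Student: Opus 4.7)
The plan is to exploit an explicit formula for $\pi(x)$ arising from elementary trigonometry in the triangle $O\,x\,\pi(x)$. In this triangle, the angle at $x$ equals $\alpha$ (by definition of $L_x$), the side opposite $x$ has length $\lambda R=\dist(O,\pi(x))$, and the side opposite $\pi(x)$ has length $\dist(O,x)\in(\lambda R,R]$. Since $\pi(x)$ is the intersection of $L_x$ with $S(0,\lambda R)$ nearest to $x$, the angle at $\pi(x)$ is obtuse, and the law of sines identifies the angle at $O$ as $\xi(\dist(O,x))-\alpha$, where
\begin{equation*}
\xi(r)\;:=\;\arcsin\!\bigl(r\sin\alpha/(\lambda R)\bigr)
\end{equation*}
is strictly increasing on $[\lambda R,R]$ and strictly below $\pi/2$ (since $\sin\alpha<\lambda$). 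With the orientation of $L_x$ fixed as in the section's convention, this translates into the identity
\begin{equation*}
\phi_{\pi(x)} \;=\; \phi_x \;-\;\bigl(\xi(\dist(O,x))-\alpha\bigr),
\end{equation*}
where $\phi_y$ denotes the angular coordinate of $y\in\R^2\setminus\{O\}$.

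For Part~(1), parametrize $\sigma$ by $x(s)=p+s(q-p)$, $s\in[0,1]$, and show that $s\mapsto\phi_{\pi(x(s))}$ is strictly monotone. Decomposing the velocity $v=q-p$ into its tangential and radial components $|v|\sin\omega(x)$ and $|v|\cos\omega(x)$ relative to $\overrightarrow{Ox}$ gives $\tfrac{d\phi_x}{ds}=|v|\sin\omega(x)/\dist(O,x)$ and $\tfrac{d}{ds}\dist(O,x)=|v|\cos\omega(x)$. Using the identity $r\,\xi'(r)=\tan\xi(r)$, the derivative $\tfrac{d}{ds}\phi_{\pi(x(s))}$ reduces, up to a strictly positive factor, to $\sin\bigl(\omega(x)-\xi(\dist(O,x))\bigr)$. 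The horizontal-positive condition $\theta\in[-\pi/2,-\pi/2+\alpha]$ pins down $\omega(m)\in[\pi/2,\pi/2+\alpha]$, while $\xi<\pi/2$ throughout $A$, so $\omega-\xi\in(0,\pi)$ at $m$; the smallness of the segments in the polygonal approximation then keeps $\omega-\xi$ inside $(0,\pi)$ all along $\sigma$, yielding strict monotonicity of $\phi_{\pi(x(s))}$ and injectivity of $\pi|_\sigma$.

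For Part~(2), Part~(1) identifies $\pi([p,m])$ as a continuous arc on $S(0,\lambda R)$ with endpoint $\pi(m)\in L_m$. The chord $L_m$ meets $S(0,\lambda R)$ at $\pi(m)$ and $\pi'(m)$, splitting the circle into two sub-arcs, exactly one of which, call it $\mathcal{A}$, lies in $\overline{H_m^c}$. The length bound $\length(\sigma)\le\eta$ together with~\eqref{antipodal} applied to the pairs $(m,x)$ for $x\in\sigma$ forces $\pi(x)$ to stay uniformly closer to $\pi(m)$ than to $\pi'(m)$, so the arc $\pi([p,m])$ cannot cross $\pi'(m)$; combined with the continuity and injectivity of $\pi|_{[p,m]}$, this confines $\pi([p,m])$ to a single connected component of $S(0,\lambda R)\setminus\{\pi(m),\pi'(m)\}$. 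To identify this component as $\mathcal{A}$, it suffices to determine the direction in which $\pi(x)$ leaves $\pi(m)$ as $s$ decreases from $1/2$ (at $m$) toward $0$ (at $p$): since $\tfrac{d}{ds}\phi_{\pi(x(s))}>0$ from Part~(1), the angular coordinate of $\pi(x)$ decreases, and a direct check with the sign convention shows this tangent direction at $\pi(m)$ points into $\overline{H_m^c}$. Hence $\pi([p,m])\subset\mathcal{A}\subset\overline{H_m^c}$.

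The main technical obstacle is keeping consistent the orientation bookkeeping for $L_x$, which dictates the sign in the identity for $\phi_{\pi(x)}$ and hence both the sign of the derivative in Part~(1) and the direction of motion of $\pi$ at $m$ in Part~(2). Once this sign is settled, Part~(1) is a one-variable calculus verification driven by $r\,\xi'(r)=\tan\xi(r)$, while Part~(2) is a topological identification resting on~\eqref{antipodal} (which is precisely the role of the smallness threshold $\eta$) to exclude the arc from wrapping through $\pi'(m)$.
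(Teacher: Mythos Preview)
Your route is genuinely different from the paper's. The paper never parametrizes $\pi$ analytically; instead it proves both parts synthetically via Lemma~\ref{lem:nointersec} (Essential disjointness of $\Delta_x$, $\Delta_y$). For Part~(1) the paper simply observes that for $x,y\in\sigma$ with $x$ closer to $p$, the positive-direction hypothesis forces $y\in H_x$, and then Lemma~\ref{lem:nointersec} yields $\pi(x)\neq\pi(y)$ directly. For Part~(2) the paper shows the whole segment $\Delta_x$ (not just its endpoint $\pi(x)$) lies in $\overline{H_m^c}$: since $m\in H_x$, Lemma~\ref{lem:nointersec} gives $\Delta_x\cap\Delta_m=\emptyset$, and the $\eta$-bound via~\eqref{antipodal} rules out $\Delta_x\cap\Delta_m'$, so $\Delta_x$ misses the line $L_m$ entirely and must sit on the side of $x$, which is $H_m^c$. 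Your explicit formula $\phi_{\pi(x)}=\phi_x-(\xi(r)-\alpha)$ and the identity $r\,\xi'(r)=\tan\xi(r)$ are nice and give a more quantitative picture, while the paper's argument is shorter and avoids any sign bookkeeping by leaning on the geometric Lemma~\ref{lem:nointersec}.

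There is, however, a genuine gap in your Part~(1). You conclude that $\omega(x)-\xi(r(x))\in(0,\pi)$ for \emph{all} $x\in\sigma$ by invoking ``the smallness of the segments in the polygonal approximation''. But Part~(1) of the lemma is stated for an arbitrary horizontal segment of~$A$, with no length restriction; the $\eta$-hypothesis appears only in Part~(2). You have only verified the sign condition at the midpoint~$m$, where $\omega(m)\in[\tfrac{\pi}{2},\tfrac{\pi}{2}+\alpha]$ and $\xi<\tfrac{\pi}{2}$. Away from $m$ the angle $\omega(x)=\omega(m)+(\phi_m-\phi_x)$ drifts with the angular coordinate of $x$, and for long chords in $A$ the quantity $\phi_m-\phi_p$ can be of order $\arccos\lambda$, so the inequality $\omega(x)-\xi(r(x))<\pi$ is not immediate from what you wrote. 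Either you must bound $\phi_m-\phi_x$ directly from the geometry of $A$ and the horizontal-positive condition (without any smallness), or you must restate and prove Part~(1) under the extra hypothesis $\length(\sigma)\le\eta$; the former is what the lemma actually claims. Relatedly, the orientation issue you flag as ``the main technical obstacle'' is precisely what pins down the sign of your derivative and the side of $L_m$ on which $\pi([p,m])$ lands in Part~(2); leaving it as a ``direct check with the sign convention'' makes both parts rest on an unverified sign, which the paper's synthetic argument via Lemma~\ref{lem:nointersec} sidesteps entirely.
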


\begin{proof}
Let $x,y \in [p,q]$ with $\dist(p,x)<\dist(p,y)$. Since the
horizontal segment~$\sigma$ points in the positive direction, the
angle $\widehat{xOy}$ is positive and $y$ lies in~$H_{x}$. From
Lemma~\ref{lem:nointersec} (Essential disjointness of $\Delta_x$,
$\Delta_y$), $\pi(x)$ and~$\pi(y)$ are distinct (the case $y =
\pi(x)$ is impossible since it would yield that the angle
$\widehat{\pi(x)Ox}=\widehat{yOx}$ is positive, a contradiction).
Hence the first part of the lemma follows. \smallskip

Let $x\in (m,p].$ From above, the midpoint~$m$ of~$\sigma$ lies
in~$H_x$ and the segments $\Delta_x$ and~$\Delta_m$ do not intersect
each other from  Lemma~\ref{lem:nointersec}. By definition
of~$\eta$, in view of \eqref{antipodal} the points $x$ and~$\pi(x)$
are closer to~$\pi(m)$ than to~$\pi'(m)$. Thus, the
segment~$\Delta_{x}$, which does not intersect~$\Delta_{m}$, does
not intersect~$\Delta'_{m}$ either. That is, $\Delta_{x}$ lies
either in~$H_{m}$ or in~$\overline{H_{m}^{c}}$. Since the horizontal
segment~$\sigma$ is pointing towards the positive direction, the
point~$x$ belongs to~$H_{m}^{c}$. Therefore, the same holds true for
the other endpoint~$\pi(x)$ of~$\Delta_{x}$. It follows that the
circular arc~$\pi([p,m])$ with endpoints~$\pi(p)$ and~$\pi(m)$ is
contained in~$H_{m}^{c}$.
\end{proof}

\begin{lemma}[Length estimate for horizontal segments] \label{lem:h}
Let $\sigma:=[p,q]$ be an horizontal segment of~$A$ with midpoint
$m$, pointing in the positive direction. Assume $\dist(O,q)\le
\dist(O,p)$. Then, $$ \length(\sigma) \leq \frac{2}{\lambda} \,
\length(\pi([p,m])). $$
\end{lemma}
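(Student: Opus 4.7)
The plan is to show $\length(\pi([p,m])) \ge \lambda\,|pm|$; since $|pq| = 2|pm|$, this yields the claim. Parameterize $[p,m]$ by unit-speed arc length $s\in[0,|pm|]$ with $x(0) = m$ and $x(|pm|) = p$. By Lemma~\ref{lem:hc}(1), $\pi$ is injective on $[p,m]$, so its image is an arc of $S(0,\lambda R)$ whose length equals $\lambda R\,|\Theta(p) - \Theta(m)|$, where $\Theta(x)$ denotes the polar angle at $O$ of $\pi(x)$. Working in polar coordinates $x=(\rho(x),\Phi(x))$ at $O$, the law of sines in the triangle $Ox\pi(x)$ (sides $\rho$ and $\lambda R$, angle $\alpha$ at $x$) identifies the vertex angle at $O$ as $\omega(\rho) = \arcsin(\rho\sin\alpha/\lambda R) - \alpha$, so that $\Theta(x) = \Phi(x) - \omega(\rho(x))$.

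Decomposing the unit tangent $\dot x$ into its polar components $(v_r,v_t)$ gives
\[ \frac{d\Theta}{ds} \;=\; \frac{v_t}{\rho} - \omega'(\rho)\,v_r, \qquad \omega'(\rho) = \frac{\sin\alpha}{\sqrt{\lambda^2 R^2-\rho^2\sin^2\alpha}}. \]
Since $\sigma$ is horizontal and points in the positive direction, the angle $\theta = -\pi/2+\beta$ at $m$ has $\beta\in[0,\alpha]$; a short computation yields $v_r(m) = \sin\beta \ge 0$ and $v_t(m) = -\cos\beta \le 0$, so the two terms in $d\Theta/ds$ have the same sign and reinforce rather than cancel. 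I then aim for the pointwise estimate $R\,|d\Theta/ds| \ge 1$, which at $m$ takes the form $R\cos\beta/\rho_m + R\,\omega'(\rho_m)\sin\beta \ge 1$. Viewed as a function of $\beta\in[0,\alpha]$, this expression is increasing (its critical point, at which $\tan\beta = \rho_m\,\omega'(\rho_m)$, lies at or beyond $\alpha$ by a short calculation using $\sin\alpha<\lambda$ and $\rho_m\ge\lambda R$), hence minimized at $\beta=0$, where it equals $R/\rho_m\ge 1$. Integrating then yields
\[ \length(\pi([p,m])) \;=\; \lambda R \int_0^{|pm|} \Bigl|\frac{d\Theta}{ds}\Bigr|\,ds \;\ge\; \lambda\,|pm|, \]
as desired.

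The main obstacle is extending the pointwise estimate from $m$ to all of $[p,m]$: as $x$ moves along $\sigma$ the polar frame rotates with $x(s)$, so the \emph{local} analogue $\tilde\beta(s)$ of $\beta$ may drift outside $[0,\alpha]$. In the intended application $\sigma$ is a segment of the polygonal approximation (Definition~\ref{def:approx}), hence short enough to satisfy the hypothesis of Lemma~\ref{lem:hc}(2), so the drift in $\tilde\beta(s)$ stays controlled and the inequality persists; if needed, a more robust argument combines the chord bound $\length(\pi([p,m]))\ge|\pi(p)-\pi(m)|$ with a law-of-cosines estimate in the isosceles triangle $O\pi(p)\pi(m)$ (whose sides $|O\pi(p)| = |O\pi(m)| = \lambda R$ are known and whose apex angle is exactly $|\Theta(p)-\Theta(m)|$).
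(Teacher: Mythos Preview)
Your differential approach is natural but has a genuine gap that you yourself flag and do not close. The pointwise inequality $R\,|d\Theta/ds|\ge 1$, which you verify at $m$, can fail on $[p,m]$. Concretely, take $\lambda=\tfrac12$, $R=1$, $\alpha$ small, and place $m=(0,\tfrac12+\varepsilon)$ with $\beta=0$ (so $\theta=-\pi/2$ and $\sigma$ is perpendicular to $\overrightarrow{mO}$). Choosing $t=|pm|$ so that $|Op|=1$ gives $p\approx(\sqrt3/2,\,1/2)$, and the whole segment lies in~$A$. At $p$ the unit tangent $(1,0)$ has polar components $v_r\approx\sqrt3/2$, $v_t\approx-1/2$, so $\tilde\beta(p)\approx\pi/3$; since $\omega'(1)\approx 2\sin\alpha$, your expression $R(\cos\tilde\beta/\rho+\omega'(\rho)\sin\tilde\beta)$ evaluates at $p$ to roughly $\tfrac12+\sqrt3\sin\alpha<1$. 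Thus the pointwise bound breaks down near $p$, and the integral argument cannot proceed as written. Your two proposed repairs do not help: the lemma carries \emph{no smallness hypothesis} on $\sigma$ (only Lemma~\ref{lem:hc}(2) does, and the present proof does not use it), so invoking the $\eta$-control of Definition~\ref{def:approx} is illegitimate here; and the ``more robust'' chord/law-of-cosines idea is not carried out---it still requires comparing $|\pi(p)\pi(m)|$ to $|pm|$, which is exactly the missing step.

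The paper sidesteps the differential estimate entirely. It introduces an auxiliary point $m'$ on the circle $S(0,|Op|)$ with $|pm'|=|pm|$; since a rotation about $O$ takes $p$ to $m'$ and hence $\pi(p)$ to $\pi(m')$, similar triangles give $|\pi(p)\pi(m')|/|pm'|=|O\pi(p)|/|Op|\ge\lambda$ at once. The remaining work is purely order-theoretic: one shows $\pi([p,m'])\subset\pi([p,m])$ via Lemma~\ref{lem:nointersec} and the injectivity in Lemma~\ref{lem:hc}(1), using an intermediate point $m''$ where $\widehat{Om''p}=\tfrac\pi2+\alpha$. The key idea you are missing is this reduction to equal-radius endpoints $p,m'$, where the comparison of $|\pi(p)\pi(m')|$ and $|pm'|$ becomes a similarity rather than a calculus estimate.
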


\begin{proof}
The line passing through~$p$ and the origin $O$ together with the
circle of radius~$|Op|$ centered at the origin define a
decomposition of the circle of radius~$|pm|$ centered at~$p$ into
four arcs. One of these arcs, denoted by~$C$, contains the
point~$m$. Let $m'$ be the endpoint of~$C$ lying in the circle of
radius~$|Op|$ centered at the origin, \cf~Fig.~\ref{fig:3} below. By
construction,
\begin{equation} \label{eq:1}
\length(\sigma) = 2 \, |pm| = 2\, |pm'|.
\end{equation}
Since $m'$ is at the same distance from the origin as~$p$, there
exists a rotation~$\rho$ centered at the origin which takes~$p$
to~$m'$. This rotation sends the ray~$[O,p]$ to~$[O,m']$ and
preserves distances and angles. Therefore, it also
sends~$\Delta_{p}$ to~$\Delta_{m'}$. In particular, the
rotation~$\rho$ maps~$\pi(p)$ to~$\pi(m')$. From Thales' formula, we
derive
$$
\frac{|\pi(p)\pi(m')|}{|pm'|} = \frac{|O\pi(p)|}{|Op|}.
$$
Hence,
\begin{equation} \label{eq:2}
|\pi(p)\pi(m')| \geq \lambda \, |pm'|.
\end{equation}
Since the endpoints of the segment $[\pi(p),\pi(m')]$ lie in the
arc~$\pi([pm'])$, we have
\begin{equation} \label{eq:3}
|\pi(p)\pi(m')| \leq \length(\pi([p,m'])).
\end{equation}

\setlength\unitlength{1pt} \noindent \begin{figure}[htbp] \noindent
\begin{picture}(370,205)(45,0)
\put(50,0){$\includegraphics[height=200pt]{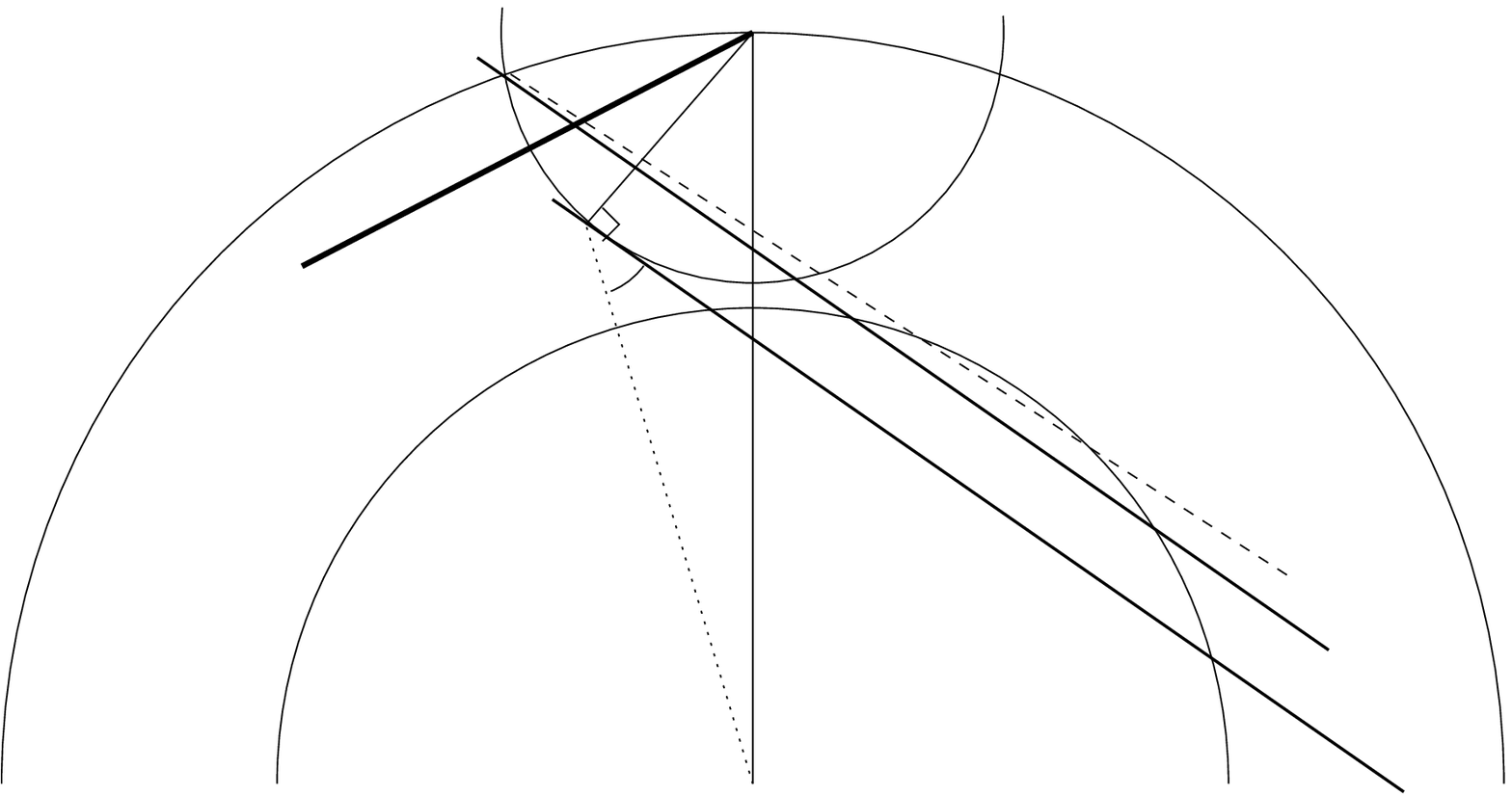}$}
\put(244,-1){$O$} \put(240,200){$p$} \put(117,134){$q$}
\put(211,108){\small $\pi(m'')$} \put(184,132){$m''$}
\put(170,163){$m$} \put(160,190){$m'$} \put(396,10){$D''$}
\put(389,35){$D'$} \put(375,60){$L_{m'}$} \put(129,30){$S(0,\lambda
R)$} \put(209,125){\small $\alpha$}
\end{picture}
\caption{} \label{fig:3}
\end{figure}

When a point~$x$, starting at~$m'$, moves along~$C$, the
angle~$\widehat{Oxp}$ increases from less than~$\frac{\pi}{2}$
to~$\pi$. Thus, there exists a unique point~$m''$ of~$C$ where the
angle~$\widehat{Om''p}$ is equal to~$\frac{\pi}{2}+\alpha$. By
definition of an horizontal segment pointing in the positive
direction, the angle~$\widehat{Omp}$ lies between~$\frac{\pi}{2}$
and~$\frac{\pi}{2}+\alpha$. Therefore, the point~$m$ lies in~$C$
between~$m'$ and~$m''$, \cf~Fig.~\ref{fig:3}. \smallskip

The angles~$\widehat{Om''p}$ and~$\widehat{Om''\pi(m'')}$ are equal
to~$\frac{\pi}{2}+\alpha$ and~$\alpha$. Therefore, the ray~$pm''$
makes a right angle at~$m''$ with the line~$D''$ passing
through~$m''$ and~$\pi(m'')$. Thus, the line~$D''$ is tangent to~$C$
at~$m''$. This implies that the points $O$, $m$ and~$m''$ lie in the
same half-plane delimited by the line~$D'$ passing through~$m'$ and
parallel to~$D''$. Since the ray~$Om'$ makes an angle less
than~$\alpha$ with~$D'$ at~$m'$, the angle between~$D'$ and~$L_{m'}$
(the half-line passing through~$m'$ and making an angle~$\alpha>0$
with~$\overrightarrow{m'O}$) is positive, \cf~Fig.~\ref{fig:3}.
Therefore, the points $O$, $m$ and~$m''$ lie in~$H_{m'}$. \smallskip

By applying Lemma~\ref{lem:nointersec} (Essential disjointness of
$\Delta_x$, $\Delta_y$), successively for $x=m'$ and~$y=m$, and for
$x=p$ and~$y=m'$, we obtain that $\pi([p,m'])$ is contained
in~$\pi([p,m])$ from the injectivity of the restriction of~$\pi$ to
the segments $[p,m']$ and~$[p,m]$, \cf~Lemma~\ref{lem:hc}. Hence,
\begin{equation} \label{eq:4}
\length(\pi([p,m'])) \leq \length(\pi([p,m])).
\end{equation}
Putting together the inequalities \eqref{eq:1}, \eqref{eq:2},
\eqref{eq:3} and \eqref{eq:4}, we obtain the desired bound.
\end{proof}

Let us now consider a polygonal decomposition in~$A$
$$
\sigma_i:=[p_i,q_i], 
\quad i\in\{1,\dots,k\}
$$
of a bounded self-contracted curve~$\gamma$ converging to $O$, \cf~Definition~\ref{def:approx}.

\begin{lemma} [Disjointness of $\pi(p_i,m_i)$ and $\pi(p_j,m_j)$] \label{lem:disj}
Let $\sigma_{i}$ and~$\sigma_{j}$ be two distinct horizontal
segments of a polygonal approximation of~$\gamma$ in~$A$, \cf~Definition~\ref{def:approx},
both pointing in the positive direction. Then,
the images by~$\pi$ of $[p_{i},m_{i}]$ and $[p_{j},m_{j}]$ are
disjoint.
\end{lemma}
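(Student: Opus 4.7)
The plan is a proof by contradiction via Lemma~\ref{lem:nointersec}. Assume without loss of generality that $i<j$ and that some $z\in\pi([p_i,m_i])\cap\pi([p_j,m_j])$ exists. By Lemma~\ref{lem:hc}(1), let $x\in[p_i,m_i]$ and $y\in[p_j,m_j]$ be the unique preimages, so that $z=\pi(x)=\pi(y)$ is a common endpoint of the segments $\Delta_x$ and $\Delta_y$.

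First I would establish the strict inequality $\dist(O,x)>\dist(O,y)$. On $[p_i,m_i]$ the function $\dist(O,\cdot)$ is monotonically nonincreasing from $p_i$ to $m_i$, because the foot of the perpendicular from $O$ onto the line carrying $\sigma_i$ falls on the $[m_i,q_i]$-side of $m_i$; this is a direct consequence of the positive-direction hypothesis $\theta_i\in[-\tfrac{\pi}{2},-\tfrac{\pi}{2}+\alpha]$. Hence $\dist(O,x)\ge\dist(O,m_i)$. Conversely, $\dist(O,y)\le\dist(O,p_j)$, and the nonincreasing property of distances along the polygonal vertices (\cf\ Corollary~\ref{corollary-convergence}) gives $\dist(O,p_j)\le\dist(O,p_{i+1})=\dist(O,q_i)\le\dist(O,m_i)$. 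A mild refinement of the precision $\eta$ then makes the inequality strict.

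The heart of the argument is to show that $y\in H_x$. The vertex-level self-contracted inequality of Remark~\ref{rmq-non-sc}, applied to the triple $(i,i+1,l)$, reads $\dist(p_i,p_l)\ge\dist(q_i,p_l)$ for every $l>i$, which places each later vertex on the closed half-plane $B_i$ bounded by the perpendicular bisector of $\sigma_i$ and containing $q_i$. By convexity, $m_l\in B_i$ for every $l>i$, and in particular the entire half-segment $[p_j,m_j]$ lies in $B_i$. One then exploits the positive-direction geometry of both $\sigma_i$ and $\sigma_j$, together with the annular constraint $[p_j,m_j]\subset A$ and the size bound $\dist(O,\cdot)\le\dist(O,q_i)$ on $[p_j,m_j]$, to locate $y$ in $H_{m_i}$. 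The transfer from $H_{m_i}$ to $H_x$ is obtained by continuity: the line carrying $L_x$ varies continuously as $x$ ranges over the short half-segment $[p_i,m_i]$, and for small enough~$\eta$ the half-plane~$H_x$ still contains~$y$.

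Once $y\in H_x$ and $\dist(O,x)>\dist(O,y)$ are in hand, Lemma~\ref{lem:nointersec} yields $\Delta_y\cap\Delta_x\subset\{y\}$; but $z$ is in this intersection and is distinct from~$y$ (since $z\in S(O,\lambda R)$ while $y$ may be assumed to lie strictly inside the annulus $A$ after a further refinement of $\eta$), giving the required contradiction. The main obstacle I anticipate is the geometric containment $y\in H_{m_i}$ in the previous paragraph: a brief coordinate check shows that the bisector inequality $y\in B_i$ together with $\dist(O,y)\le\dist(O,q_i)$ is \emph{not} quite enough on its own, so the proof must use in an essential way that $y$ is not an arbitrary point of $B_i\cap A$ but the specific preimage, under $\pi$, of a point on the arc $\pi([p_i,m_i])$ coming from yet another positive-direction horizontal segment~$\sigma_j$ of the polygonal approximation.
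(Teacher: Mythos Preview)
Your argument has a structural gap that the paper avoids with one stroke you never make: an appeal to Lemma~\ref{lem:hc}(2). Because you do not use it, you are forced into the ``transfer from $H_{m_i}$ to $H_x$'' step, and that step does not stand up. The half-plane $H_x$ moves with $x\in[p_i,m_i]$, and your point $y$ (the $\pi$-preimage of $\pi(x)$ on~$\sigma_j$) moves with $x$ as well; shrinking $\eta$ shrinks both segments at once, so there is no uniform margin that would let you infer $y\in H_x$ from $y\in H_{m_i}$. The paper sidesteps this entirely by using $H_{m_i}$ as a \emph{fixed} separating half-plane: Lemma~\ref{lem:hc}(2) already places the whole arc $\pi([p_i,m_i])$ in $\overline{H_{m_i}^{c}}$, so it remains only to show that $\pi([p_j,m_j])$ lies in the interior of~$H_{m_i}$, and no pointwise contradiction or transfer is needed.

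For that remaining containment the paper does precisely what you flagged as ``not quite enough'': it combines the bisector constraint $p_j,q_j,m_j\in B_i$ with the disk constraint $\dist(O,p_j),\dist(O,m_j)\le\dist(O,q_i)$. The ingredient you are missing is the extra inequality $\dist(O,q_i)<\dist(O,m_i)$ that the paper records. Granting it, the disk of radius $|Oq_i|$ about $O$ lies entirely on the $O$-side of the line through $m_i$ perpendicular to $\overrightarrow{m_iO}$; on that side the boundary lines of $B_i$ and of $H_{m_i}$ (both through $m_i$, making angles at most $\alpha$ and exactly $\alpha$ with $\overrightarrow{m_iO}$) are ordered so that $B_i\cap\{\text{that side}\}\subset H_{m_i}$. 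Hence $p_j,m_j\in H_{m_i}$ with $\dist(O,m_i)>\dist(O,p_j),\dist(O,m_j)$, and Lemma~\ref{lem:nointersec} (applied with $x=m_i$) carries the full segments $\Delta_{p_j},\Delta_{m_j}$---and thus their endpoints $\pi(p_j),\pi(m_j)$---into the interior of~$H_{m_i}$. No special property of $y$ as a $\pi$-preimage is invoked.
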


\begin{proof}
Switching the indices~$i$ and~$j$ if necessary, we can assume that
$i < j$.

From Lemma~\ref{lem:hc}, the arc~$\pi([p_{i},m_{i}])$ is contained
in~$\overline{H_{m_{i}}^{c}}$. To prove the desired result, it is
enough to show that $\pi([p_{j},m_{j}])$ lies in the complement
of~$\overline{H_{m_{i}}^{c}}$ ({\it i.e.} the interior of
$H_{m_i}$).\smallskip

From the definition of a self-contracted curve, the
points~$p_{j}$ and~$q_{j}$ are closer to~$q_{i}$ than to~$p_{i}$
(see Remark~\ref{rmq-non-sc}). Thus, $p_{j}$ and~$q_{j}$, and so
their barycenter~$m_{j}$, lie in the half-plane delimited by the
perpendicular bisector of~$\sigma_{i}$. (Notice that this half-plane
also contains the origin $O$, in view of
Corollary~\ref{corollary-convergence}.) The half-line of this
bisector with endpoint~$m_{i}$ which makes an acute angle with the
ray~$m_{i}O$ is noted~$D_{m_{i}}$. Since the horizontal
segment~$\sigma_{i}$ points in the positive direction, its
half-bisector~$D_{m_{i}}$ makes an angle less or equal to~$\alpha$
with~$m_{i}O$. Thus, $L_{m_{i}}$ lies in the half-plane delimited by
the perpendicular bisector of~$\sigma_{i}$ not containing the
origin.

\setlength\unitlength{1pt} \noindent \begin{figure}[htbp] \noindent
\begin{picture}(370,150)(45,0)
\put(140,0){$\includegraphics[height=150pt]{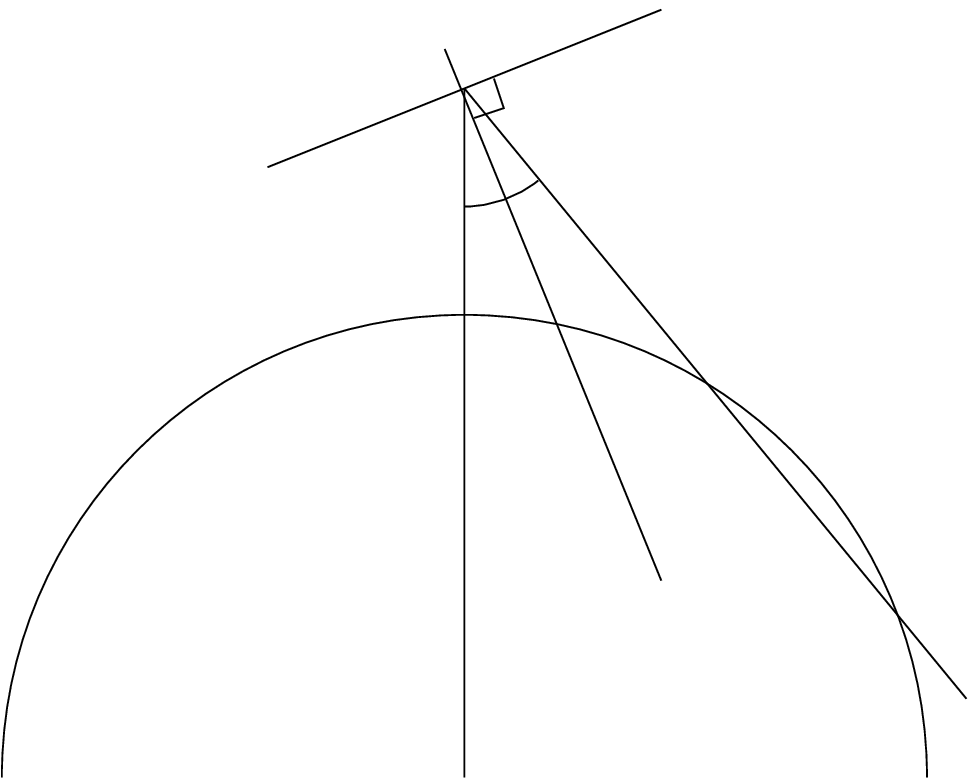}$}
\put(217,-1){$O$} \put(229,144){$m_i$} \put(268,155){$p_i$}
\put(182,122){$q_i$} \put(261,25){$D_{m_i}$} \put(331,11){$L_{m_i}$}
\put(150,20){$S(0,\lambda R)$}
\end{picture}
\caption{} \label{fig:4}
\end{figure}

Now, since the function $t \mapsto \dist(O,\gamma(t))$ is
nonincreasing, the points $p_{j}$ and~$q_{j}$, and so their
barycenter~$m_{j}$, belong to the disk of radius~$|Oq_{i}|
<|Om_{i}|$ centered at the origin. Therefore, the points $p_{j}$
and~$m_{j}$ lie in~$H_{m_{i}}$, and
$\dist(O,m_{i})>\mathrm{max}\{\dist(O,p_j), \dist(O,m_j)\}$. From
Lemma~\ref{lem:nointersec} (Essential disjointness of $\Delta_x$,
$\Delta_y$), the segments $\Delta_{p_{j}}$ and~$\Delta_{m_{j}}$ lie
in ~$H_{m_{i}}$ and do not intersect its boundary. Therefore, their
endpoints $\pi(p_{j})$ and~$\pi(m_{j})$ also lie in the
interior~$H_{m_{i}}$.
\end{proof}

The total length of the horizontal segments satisfies the following
upper bound.

\begin{lemma}[Total horizontal length upper bound] \label{lem:H}
If $\{\sigma_{i}\}_{i=1}^{k+1}$ is a polygonal approximation of
$\gamma$ in the annulus $A$, \cf~Definition~\ref{def:approx}, then
$$ \sum_{i \in
\mathcal{H}} \length(\sigma_{i}) \leq \frac{8 \pi}{1-\lambda} \,
\Delta R
$$
where $\mathcal{H}$ is the set of indices corresponding to the
horizontal segments.
\end{lemma}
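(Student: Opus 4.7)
The plan is to split $\mathcal{H} = \mathcal{H}^{+} \cup \mathcal{H}^{-}$ according to whether the horizontal segment points in the positive or negative direction, and to bound the total length of each family separately using the previous three lemmas.

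First I treat $\mathcal{H}^{+}$. For each $i \in \mathcal{H}^{+}$, the segment $\sigma_i = [p_i,q_i]$ is horizontal pointing in the positive direction, and the convention of Definition~\ref{def:approx} already gives $\dist(O,q_i) \leq \dist(O,p_i)$. Lemma~\ref{lem:h} then yields
\begin{equation*}
\length(\sigma_i) \leq \frac{2}{\lambda}\, \length(\pi([p_i,m_i])).
\end{equation*}
By Lemma~\ref{lem:disj}, the arcs $\pi([p_i,m_i])$, $i \in \mathcal{H}^{+}$, are pairwise disjoint subsets of the circle $S(O,\lambda R)$ of length $2\pi\lambda R$. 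Summing over $i \in \mathcal{H}^{+}$ therefore gives
\begin{equation*}
\sum_{i \in \mathcal{H}^{+}} \length(\sigma_i) \leq \frac{2}{\lambda} \cdot 2\pi \lambda R = 4\pi R.
\end{equation*}

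The segments in $\mathcal{H}^{-}$ are handled by symmetry: reflecting the plane across the line through $O$ and $m_i$ (or equivalently using the analogues of Lemmas~\ref{lem:hc}, \ref{lem:h}, \ref{lem:disj} with the opposite orientation) shows that the same estimate holds with $\mathcal{H}^{-}$ in place of $\mathcal{H}^{+}$. Adding the two bounds,
\begin{equation*}
\sum_{i \in \mathcal{H}} \length(\sigma_i) \leq 8\pi R.
\end{equation*}
Since $\Delta R = (1-\lambda) R$, this is exactly $\frac{8\pi}{1-\lambda}\, \Delta R$, as desired.

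The only delicate point is making sure the two orientation classes do not interfere with each other in the summation. They do not, because we bound each class separately by the length of a disjoint family of arcs on the same circle $S(O,\lambda R)$; the bound $2\pi \lambda R$ is used twice, once for each family, and no arc from $\mathcal{H}^{+}$ needs to be disjoint from an arc arising from $\mathcal{H}^{-}$. Thus the main obstacle is purely bookkeeping: verifying that the orientation-sensitive statements of Lemmas~\ref{lem:hc}, \ref{lem:h} and~\ref{lem:disj} apply symmetrically to segments pointing in the negative direction, which follows at once from the invariance of the construction under the reflection exchanging the two senses of rotation.
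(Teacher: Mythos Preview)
Your proof is correct and follows essentially the same route as the paper's: split $\mathcal{H}$ into positive and negative directions, apply Lemma~\ref{lem:h} and the disjointness from Lemma~\ref{lem:disj} to bound each family by $4\pi R$, then rewrite $8\pi R$ as $\frac{8\pi}{1-\lambda}\,\Delta R$. One small remark: the inequality $\dist(O,q_i)\le\dist(O,p_i)$ holds for \emph{all} segments because $t\mapsto\dist(O,\gamma(t))$ is nonincreasing (Corollary~\ref{corollary-convergence}), not because of the vertical-segment convention you cite from Definition~\ref{def:approx}.
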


\begin{proof}
From Lemma~\ref{lem:h} (Length estimate for horizontal segments),
the sum of the lengths of the horizontal segments~$\sigma_{i}$
pointing in the positive direction satisfies $$ \sum_{i \in
\mathcal{H}_{+}} \length(\sigma_{i}) \leq \frac{2}{\lambda} \,
\sum_{i \in \mathcal{H}_{+}} \length(\pi([p_{i},m_{i}])),$$ where
$\mathcal{H}_{+}$ is the set of indices corresponding to the
horizontal segments pointing in the positive direction. Since the
arcs~$\pi([p_{i},m_{i}])$ of the circle~$S(0,\lambda R)$ are
disjoint, \cf~Lemma~\ref{lem:disj}, we have
$$ \sum_{i \in \mathcal{H}_{+}} \length(\pi([p_{i},m_{i}])) \leq 2
\pi \lambda R. $$ Analogous arguments lead to a similar bound for
the sum of the lengths of the horizontal segments~$\sigma_{i}$
pointing in the negative direction. Recalling that $\Delta R=
(1-\lambda)R$ the result follows.
\end{proof}

\section{Proof of the main result} \label{sec:proofs}

In order to prove our main theorem (\cf~Theorem~\ref{Theorem_main}),
we shall first need the following result.

\begin{proposition} [Length estimate in the annulus $A$] \label{prop:DR}
Every continuous self-contracted planar curve~$\gamma$ converging to
the origin $O$ satisfies $$\length(\gamma \cap A) \leq (8 \pi + 2)
\, \Delta R. $$
\end{proposition}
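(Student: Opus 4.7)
The plan is to apply Lemmas~\ref{lem:V} and \ref{lem:H} to a polygonal approximation $\{\sigma_i\}_{i=1}^{k+1}$ of $\gamma\cap A$ in the sense of Definition~\ref{def:approx}. Since each segment is classified as either vertical or horizontal (cf.\ Definition~\ref{Def-classification}), the two lemmas immediately yield
\[
\sum_{i=1}^{k} \length(\sigma_i)
\;=\; \sum_{i\in\mathcal V}\length(\sigma_i) + \sum_{i\in\mathcal H}\length(\sigma_i)
\;\le\; \Big(\frac{2}{\cos\alpha} + \frac{8\pi}{1-\lambda}\Big)\,\Delta R.
\]
Letting the precision $\eta$ of the polygonal approximation tend to zero, this estimate passes to $\length(\gamma\cap A)$.

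The constant $8\pi+2$ in the statement is the infimum of the multiplicative factor $\frac{2}{\cos\alpha}+\frac{8\pi}{1-\lambda}$ over the admissible range $\sin\alpha<\lambda<1$, reached in the regime $\alpha,\lambda\to 0^+$ (where the constraint $\sin\alpha<\lambda$ is maintained). Since the annulus $A=U(\lambda R,R)$ is fixed with a prescribed $\lambda$, the plan is to subdivide $A$ into a finite nested sequence of sub-annuli $A_j=U(R_{j+1},R_j)$ with $R_0=R$ and $R_k=r$, and to apply the combined estimate on each $A_j$ with locally chosen parameters $(\alpha_j,\lambda_j)$ pushed close to $(0,0)$. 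Since the sub-annulus widths telescope, $\sum_j \Delta R_j=\Delta R$, summing the per-sub-annulus bounds produces the multiplier $8\pi+2$ in the limit.

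The main technical obstacle is the constraint $\prod_j \lambda_j = r/R>0$, which prevents all the ratios $\lambda_j$ from being made arbitrarily small at once. To circumvent it, I plan to invoke the hypothesis that $\gamma$ converges to the origin: by Corollary~\ref{corollary-convergence}, the map $t\mapsto\dist(O,\gamma(t))$ is nonincreasing and tends to $0$, so one may pass to a limiting argument in which the residual contribution from the innermost sub-annulus is rendered negligible. Coupling this limit with the telescoping estimate delivered by the subdivision then yields the announced bound $\length(\gamma\cap A)\le (8\pi+2)\,\Delta R$.
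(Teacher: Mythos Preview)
Your first paragraph is precisely the paper's argument: combine Lemmas~\ref{lem:V} and~\ref{lem:H} on a polygonal approximation, pass to the supremum over such approximations, and obtain
\[
\length(\gamma\cap A)\le\Bigl(\frac{8\pi}{1-\lambda}+\frac{2}{\cos\alpha}\Bigr)\,\Delta R.
\]
The paper then simply writes ``letting $\alpha$ and $\lambda$ go to zero'' and records the constant $8\pi+2$; no subdivision of $A$ into sub-annuli is carried out.

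You are right to notice that this limiting step is delicate if one reads $A=U(r,R)$ as a \emph{fixed} annulus with $\lambda=r/R$ prescribed, and your diagnosis of the obstruction $\prod_j\lambda_j=r/R>0$ is correct. However, your proposed workaround does not close the gap. If the sub-annuli remain inside $A$, the product constraint forces at least one ratio $\lambda_j$ to be bounded below, and since $\tfrac{8\pi}{1-\lambda_j}\,\Delta R_j=8\pi R_j$, the horizontal contribution from that single sub-annulus alone already exceeds $8\pi r$, which is not controlled by $\Delta R$. If instead you extend the subdivision inward past $r$ toward the origin (as your ``residual'' remark suggests), the widths telescope to $R$ rather than to $\Delta R=R-r$, and you recover only $\length(\gamma\cap A)\le(8\pi+2)\,R$, strictly weaker than the claim.

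The resolution is that in the paper's setup the annulus $A=U(\lambda R,R)$ and its width $\Delta R=(1-\lambda)R$ are themselves parametrised by $\lambda$, so both sides of the displayed inequality move together as $\lambda\to 0$; Proposition~\ref{prop:DR} is really a family of estimates indexed by $(\alpha,\lambda)$ rather than a bound for one fixed annulus. Its sole use is in the proof of Theorem~\ref{Theorem_main}, where one first sums the per-annulus bounds over a $\lambda$-adic decomposition of the disk (so that $\sum_i\Delta R_i=R_0$ independently of $\lambda$) and \emph{then} lets $\alpha,\lambda\to 0$. In that order the constant $8\pi+2$ drops out without difficulty; the paper has merely placed the limiting sentence one proposition early.
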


\begin{proof}
Consider a decomposition of~$\gamma$ into segments~$\sigma_i$ as in
Definition~\ref{def:approx} (refining a subdivision does not decrease the
sum). From Lemma~\ref{lem:V} (Total vertical length upper bound) and
Lemma~\ref{lem:H} (Total horizontal length upper bound), the
length~$L$ of the polygonal line $\sigma_{1} \sigma_{2} \cdots
\sigma_{k}$ satisfies
\begin{eqnarray*}
L & = & \sum_{i \in \mathcal{H}} \length(\sigma_{i}) + \sum_{i \in \mathcal{V}} \length(\sigma_{i}) \\
 & \leq & \left( \frac{8\pi}{1-\lambda} + \frac{2}{\cos \alpha} \right) \, \Delta R.
\end{eqnarray*}
By taking the supremum of~$L$ over all such decompositions
respecting the annulus $A$, we derive the same upper bound for the
length of~$\gamma \cap A$. Finally, by letting $\alpha$ and
$\lambda$ go to zero, we obtain $$ \length(\gamma \cap A) \leq (8
\pi + 2) \, \Delta R. $$
\end{proof}

Now, we can derive our main result.

\begin{proof}[Proof of Theorem~\ref{Theorem_main}]
From Corollary~\ref{corollary-convergence}, the bounded continuous
self-contracted curve~$\gamma$ converges to a point. Using a
translation if necessary, we can assume that this point agrees with
the origin~$O$ of~$\R^2$. \smallskip

Let $t_-=\inf I$. Denote by $\gamma(t_-)$ the limit of~$\gamma(t)$
when $t$ goes to~$t_-$, (\cf~Proposition~\ref{prop:cont} (Existence
of left/right limits)). Set $R_{0}= \dist(O,\gamma(t_-))$. For
$i\in\mathbb{N}$, let~$A_{i}$ be the planar annulus centered at the
origin with outer radius~$R_{i}$ and inner radius~$R_{i+1}$, where
$R_{i+1}=\lambda R_{i}$, with $\lambda\in(0,1)$ given in
\eqref{lambda}. From Proposition~\ref{prop:DR} (Length estimate in
the annulus), we have
\begin{equation} \label{eq:Di}
\length(\gamma \cap A_{i}) \leq (8\pi+2) \, \Delta R_{i}
\end{equation}
where $\Delta R_{i}$ is the width of~$A_{i}$. Since $\lambda < 1$,
the sequence~$R_{i}$ goes to zero and the sum of the width of the
disjoint annulus~$A_{i}$ is equal to~$R_0$. Thus, taking the sum of
the above inequalities~\eqref{eq:Di} for $i\in\mathbb{N}$ we obtain
the desired bound $$ \length(\gamma) \leq (8\pi+2) \,
\dist(O,\gamma(t_-)).$$ The proof is complete.
\end{proof}

\section{Gradient and subgradient systems, and convex foliations}

\label{sec:6}

In this section, we apply Theorem \ref{Theorem_main} to derive
length estimates, first for orbits of dynamical systems of gradient
or subgradient type, then for orbits orthogonal to a convex
foliation. The key fact is to observe that in some interesting
particular cases (for instance, $f$ convex or quasiconvex) these
curves are self-contracted. Recall however that this is not the case
for gradient dynamical systems defined by a general $C^{\infty}$
function, as already observed in Remark~\ref{remark-basic}~(iv).

\subsection{Gradient dynamical system -- quasiconvex case}

Let $f:\R^n\rightarrow\R$ be a $C^{k}$ function ($k\geq1$),
$x_{0}\in\R^n$ and consider the Gradient Dynamical System
\begin{equation} \left\{
\begin{array}
[c]{l} \dot{\gamma}(t)=-\nabla f(\gamma(t)), \ t>0 \smallskip \\
\gamma(0)=x_{0}\in\R^n.
\end{array}
\right.  \label{GDS}
\end{equation}
It follows directly from the standard theory of Ordinary
Differential Equations (see \cite{Hale1980}, for example) that the
system~\eqref{GDS} admits a solution (trajectory)
$\gamma:I\longmapsto \mathbb{R}^{n}$, where
$I\subset\lbrack0,+\infty)$, which is a curve of class $C^{k-1}.$
Note that the case $k=1$ corresponds to mere continuity of $\gamma
$, while for $k>1$ (or more generally, if $f$ is assumed $C^{1,1},$
that is, $\nabla f$ is Lipschitz continuous), the trajectory
$\gamma$ is unique. In the sequel, we shall always consider maximal
solutions of \eqref{GDS}, that is, for which $I=[0,T),$ where $T>0$
is the maximal time such that $\gamma$ is defined in $[0,T).$ We
shall refer to them as orbits of the \emph{gradient flow} of $f$.
\smallskip

We will also need the following definition.

\begin{definition}[Convex, quasiconvex and coercive functions] 
A function $f:\mathbb{R}^{n}\rightarrow\mathbb{R}$ is called
\emph{convex} (respectively, \emph{quasiconvex}) if its epigraph
\[
\mathrm{epi\,}f:=\{(x,y)\in\mathbb{R}^{n+1} \mid f(x)\leq y\}
\]
is a convex subset of $\mathbb{R}^{n} \times \R$ (respectively, if
for every $y\in\mathbb{R}$ the sublevel set $\{x\in\mathbb{R}^{n}
\mid f(x)\leq y\}$ is a convex subset of $\mathbb{R}^{n}$). A
function $f$ is called \emph{coercive} (or \emph{proper}), if its
level sets are bounded, or equivalently, if
\begin{equation}\label{coercive}
\lim_{\|x\|\rightarrow +\infty} f(x)=+\infty.
\end{equation}
\end{definition}

It is straightforward to see that whenever $f$ is coercive 
the corresponding flow orbits are bounded curves and therefore
$I=[0,+\infty)$ (the trajectories are defined for all $t\ge 0$).
Notice in particular that the function
\begin{equation*}
t\longmapsto f(\gamma(t)),\quad t\in [0,+\infty)\label{L}
\end{equation*}
is a natural Lyapunov function for the orbits of the flow, \ie~it is
nonincreasing along the trajectories. Moreover, unless $\gamma$
meets a critical point (\textit{i.e.}~$\nabla f(\gamma(t_{\ast}))=0$
for some $t_{\ast}\in [0,+\infty)$), the function defined
in~\eqref{L} is decreasing and $\gamma$ is injective. \smallskip

Let us finally recall (\textit{e.g.}~\cite[Theorem~2.1]{yboon}) that
a (differentiable) function $f:\mathbb{R}^n\rightarrow\mathbb{R}$ is
quasiconvex if and only if for every $x,y \in \mathbb{R}^n$ the following
holds:
\begin{equation}\label{qc}
\langle \nabla f(x), y-x\rangle >0 \Rightarrow f(y)\geq f(x).
\end{equation}
We are now ready to establish the following result.

\begin{proposition}
[Quasiconvex orbits are self-contracted
curves]\label{proposition_GDS}The orbits of the gradient flow of a
quasiconvex $C^{1,1}$ function are self-contracted curves.
\end{proposition}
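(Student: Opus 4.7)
The plan is to show directly from the definition that, for any $t_1\le t_2\le t_3$ in the domain of $\gamma$, the distance $\dist(\gamma(t_2),\gamma(t_3))$ is no larger than $\dist(\gamma(t_1),\gamma(t_3))$. To this end, I fix $t_3$ once and for all and study the squared-distance function
\begin{equation*}
\varphi(t):=\tfrac{1}{2}\,\Vert \gamma(t)-\gamma(t_3)\Vert^{2},\qquad t\in[0,t_3].
\end{equation*}
It suffices to prove that $\varphi$ is nonincreasing on $[0,t_3]$, since then $\varphi(t_1)\ge \varphi(t_2)$ is exactly the self-contractedness inequality~\eqref{eq:self}. Because $f$ is $C^{1,1}$, the gradient orbit $\gamma$ is at least $C^{1}$, so $\varphi$ is differentiable with
\begin{equation*}
\varphi'(t)=\langle \dot\gamma(t),\gamma(t)-\gamma(t_3)\rangle=\langle \nabla f(\gamma(t)),\,\gamma(t_3)-\gamma(t)\rangle.
\end{equation*}

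The key step is to show that $\varphi'(t)\le 0$ on $[0,t_3]$, i.e.\ that $\langle \nabla f(\gamma(t)),\gamma(t_3)-\gamma(t)\rangle\le 0$, and this is where quasiconvexity enters through the characterization~\eqref{qc}. I argue by contradiction: assume there exists $t\in[0,t_3]$ with
\begin{equation*}
\langle \nabla f(\gamma(t)),\,\gamma(t_3)-\gamma(t)\rangle>0.
\end{equation*}
Applying~\eqref{qc} with $x=\gamma(t)$ and $y=\gamma(t_3)$, quasiconvexity of $f$ yields $f(\gamma(t_3))\ge f(\gamma(t))$. On the other hand, the composition $s\mapsto f(\gamma(s))$ is a Lyapunov function for the flow, since $\tfrac{d}{ds}f(\gamma(s))=-\Vert\nabla f(\gamma(s))\Vert^{2}\le 0$. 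Because $t\le t_3$, this forces $f(\gamma(t_3))\le f(\gamma(t))$, so equality must hold throughout.

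Equality in the Lyapunov inequality on $[t,t_3]$ forces $\nabla f(\gamma(s))=0$ for every $s\in[t,t_3]$, hence $\dot\gamma(s)\equiv 0$ on this interval, so $\gamma$ is constant there and in particular $\gamma(t)=\gamma(t_3)$. But then $\gamma(t_3)-\gamma(t)=0$, contradicting the strict positivity $\langle \nabla f(\gamma(t)),\gamma(t_3)-\gamma(t)\rangle>0$ that we had assumed. Therefore $\varphi'\le 0$ on $[0,t_3]$, which completes the proof.

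The only subtle point is the handling of the degenerate case in the implication~\eqref{qc}: quasiconvexity only produces a weak inequality $f(\gamma(t_3))\ge f(\gamma(t))$, so one cannot immediately contradict the Lyapunov decrease of $f$. The argument above bypasses this obstacle by deducing $\gamma(t)=\gamma(t_3)$ from the equality case of the Lyapunov estimate, which is then incompatible with the \emph{strict} inequality assumed by contradiction. No regularity beyond $C^{1}$ of $\gamma$ is used, so the hypothesis $f\in C^{1,1}$ only serves to guarantee existence and uniqueness of the orbit; the argument would work for any solution of~\eqref{GDS}.
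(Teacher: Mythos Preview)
Your proof is correct and follows essentially the same approach as the paper: fix the terminal time, differentiate the squared distance, and derive a contradiction from quasiconvexity via~\eqref{qc} combined with the Lyapunov decrease of $f\circ\gamma$. Your handling of the equality case (concluding $\gamma(t)=\gamma(t_3)$ and hence that the inner product vanishes) is slightly more explicit than the paper's, which simply notes $\nabla f(\gamma(t))=0$; both yield the same contradiction.
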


\begin{proof} Let $\gamma:I\longmapsto\mathbb{R}^{n}$ be an orbit of
the gradient flow of $f$. Let $0\leq t\leq t_1$ be in $I$ and
consider the function
\[
g(t)=\frac{1}{2}||\gamma(t)-\gamma(t_1)||^{2},\quad t\in I.
\]
In view of \eqref{GDS}, we easily deduce that
\[
g^{\prime}(t)=\langle\nabla
f(\gamma(t)),\gamma(t_1)-\gamma(t)\rangle.
\]
If $g^{\prime}(t)>0$ for some $t\in\lbrack0,t_{1}),$ then the
quasiconvexity of $f$ would imply that $f(\gamma(t_{1}))\geq
f(\gamma(t))$ (see \eqref{qc}), which in view of \eqref{L} would
yield that $\gamma(t^{\prime })=\gamma(t_{1})$ for all
$t^{\prime}\in\lbrack t,t_{1}]$ and $\nabla f(\gamma(t))=0,$ a
contradiction. Thus, $g$ is nonincreasing in the interval
$[0,t_{1}]$, which proves the assertion. \end{proof}

The following corollary is a straightforward consequence of the
previous proposition and Theorem~\ref{Theorem_main} (Main result).

\begin{corollary}
[Orbits of a gradient quasiconvex flow]\label{Corollary_quasiconvex}
Let $f:\R^2\rightarrow\R$ be a coercive $C^{1,1}$ quasiconvex
function.
Then, for every $x_0 \in \R^2$, the orbit of the gradient flow~\eqref{GDS} converges and has finite length.

\forget
 Let $\gamma:[0,+\infty)\mapsto\mathbb{R}^{2}$ be an orbit
of the corresponding gradient flow with $\gamma(0)=x_{0}\in \R^2$.
Then \smallskip
\begin{enumerate}
\item[(i)]  $\displaystyle \lim_{t \to +\infty} \gamma(t)=x_{\infty}\in \R^2$ (convergence of the
orbit); \smallskip
\item[(ii)] $\displaystyle
  \int_{0}^{+\infty}||\dot{\gamma}(t)||dt\leq(8\pi+2)
||x_{0}-x_{\infty}||$ (finite length of the orbit).
\end{enumerate}
\forgotten

\end{corollary}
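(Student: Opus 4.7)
The plan is to combine the three ingredients already at our disposal: coercivity gives boundedness of the orbit, Proposition~\ref{proposition_GDS} gives self-contractedness, and Theorem~\ref{Theorem_main} then gives finite length (and convergence).

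First, I would verify that the orbit $\gamma:[0,T)\to\R^2$ issuing from $x_0$ is globally defined and bounded. Since $f$ is $C^{1,1}$, the vector field $-\nabla f$ is locally Lipschitz, so the Cauchy--Lipschitz theorem yields a unique maximal $C^1$-solution. Along this solution, $\frac{d}{dt}f(\gamma(t))=-\|\nabla f(\gamma(t))\|^2\leq 0$, so $\gamma(t)$ lies in the sublevel set $\{f\leq f(x_0)\}$ for all $t\in[0,T)$. By coercivity \eqref{coercive}, this sublevel set is bounded, hence $\gamma$ is bounded, and the standard blow-up alternative forces $T=+\infty$.

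Next, Proposition~\ref{proposition_GDS} asserts that $\gamma$ is a self-contracted curve (its proof only uses quasiconvexity together with \eqref{qc} and the fact that $f\circ\gamma$ is nonincreasing, both of which are valid in our setting). Since $\gamma$ is moreover continuous and planar, Theorem~\ref{Theorem_main} applies directly: $\gamma$ has finite length, with the explicit bound $\length(\gamma)\leq(8\pi+2)\,D(\gamma)$. Finally, the convergence of $\gamma(t)$ as $t\to+\infty$ to some $x_\infty\in\R^2$ follows either from Corollary~\ref{corollary-convergence} applied to the bounded self-contracted curve $\gamma$, or, more directly, from the fact that a rectifiable curve defined on $[0,+\infty)$ has a limit at infinity.

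There is essentially no obstacle here: the corollary is a straightforward assembly of earlier statements, and the only point requiring a brief check is the global existence of $\gamma$, which is handled by coercivity together with the Lyapunov property of $f$ along the flow.
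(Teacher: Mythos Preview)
Your proof is correct and matches the paper's approach exactly: the paper states the corollary as ``a straightforward consequence of the previous proposition and Theorem~\ref{Theorem_main}'' without further detail, and your three steps (boundedness via coercivity and the Lyapunov property, self-contractedness via Proposition~\ref{proposition_GDS}, finite length and convergence via Theorem~\ref{Theorem_main} and Corollary~\ref{corollary-convergence}) spell out precisely this assembly. The global existence check you include is indeed handled in the paper just before the corollary, so nothing is missing.
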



\subsection{Subgradient dynamical systems - convex case}

A convex function is a particular case of a quasiconvex function.
Therefore, Corollary~\ref{Corollary_quasiconvex} implies that the
orbits of the gradient flow of $C^{1,1}$ convex functions are of
finite length. It is well-known (\cite{Brezis}) that in the case of
a (nonsmooth) convex function $f:\R^n\to\R$ (or more generally, for
a semiconvex function \cite{DMT85}), the gradient system~\eqref{GDS}
can be generalized to the following differential inclusion, called
Subgradient Dynamical System
\begin{equation}
\left\{
\begin{array}
[c]{l} \dot{\gamma}(t)\in-\partial f(\gamma (t))\text{
  \quad}\mathrm{a.e.} \ t\in [0,+\infty),\\
\gamma (0)=x_{0}\in\R^n,
\end{array}
\right.  \label{SDS}
\end{equation}
where $\partial f$ is the set of the subgradients (subdifferential)
of $f.$ If $f:\R^n\to\R$ is convex, then this latter set is defined
as
\[
\partial f(x)=\{p\in\mathbb{R}^{n} \mid  f(y)\geq f(x)+\langle
p,y-x\rangle,\;\forall y\in\mathbb{R}^{n}\} \quad {\rm for \ all \ }
x\in\R^n\,.
\]
The above formula defines always a nonempty convex compact subset of
$\R^n$, and reduces to $\{\nabla f(x)\}$ whenever $f$ is
differentiable at $x$, {\it cf.} \cite{Clarke83}. It is also known
that \eqref{SDS} has a unique absolutely continuous solution
$\gamma: [0,+\infty)\to \R^n$, that is, the derivative
$\dot{\gamma}(t)=\frac{d}{dt}\gamma(t)$ exists almost everywhere and
for every $0\leq t_{1}\leq t_{2},$
\[
\gamma(t_{2})=\gamma(t_{1})+\int_{t_{1}}^{t_{2}}\dot{\gamma}(t)\,dt
\,.
\]

\forget
When $n=2$, we have the following generalization of Theorem~\ref{thm-intro-convex}.

\begin{proposition}
[Orbits of a subgradient convex flow] 
Let $f:\mathbb{R}^{2}\rightarrow\mathbb{R}$ be a convex continuous
function which admits a minimum. Then, for every
$x_0\in\R^2,$ the trajectory~$\gamma$ of the subgradient system \eqref{SDS}
is a bounded self-contracted curve and therefore has a finite
length.
\end{proposition}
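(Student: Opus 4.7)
The plan is to reduce the proposition to Theorem~\ref{Theorem_main} by verifying that $\gamma$ is bounded and self-contracted; the main result then yields the finite length estimate. The strategy closely parallels the proof of Proposition~\ref{proposition_GDS}, but has to be adapted to the multi-valued setting by exploiting the subgradient inequality in place of the quasiconvex characterization~\eqref{qc}.

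First I would establish \emph{boundedness} via the classical Fej\'er-type argument for convex flows. Let $x^{*}$ be a minimizer of $f$ and set $\varphi(t):=\tfrac12\|\gamma(t)-x^{*}\|^{2}$. Since $\gamma$ is absolutely continuous, so is $\varphi$, and for a.e.~$t$ one has $\varphi'(t)=\langle\dot\gamma(t),\gamma(t)-x^{*}\rangle=-\langle v(t),\gamma(t)-x^{*}\rangle$ with $v(t)\in\partial f(\gamma(t))$ and $\dot\gamma(t)=-v(t)$. The subgradient inequality at $x^{*}$ gives $\langle v(t),\gamma(t)-x^{*}\rangle\ge f(\gamma(t))-f(x^{*})\ge 0$, hence $\varphi$ is nonincreasing and $\gamma(t)\in B(x^{*},\|x_{0}-x^{*}\|)$ for every $t\ge 0$.

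Next I would prove \emph{self-contraction} by the direct analogue of the calculation in Proposition~\ref{proposition_GDS}. Fix $t_{3}>0$ and consider $g(t):=\tfrac12\|\gamma(t)-\gamma(t_{3})\|^{2}$ on $[0,t_{3}]$. Its a.e.\ derivative is $g'(t)=-\langle v(t),\gamma(t)-\gamma(t_{3})\rangle$, and the subgradient inequality applied at $\gamma(t_{3})$ yields $\langle v(t),\gamma(t)-\gamma(t_{3})\rangle\ge f(\gamma(t))-f(\gamma(t_{3}))$. Since $t\mapsto f(\gamma(t))$ is a Lyapunov function for the subgradient flow (with derivative $-\|v(t)\|^{2}\le 0$ almost everywhere), the right-hand side is nonnegative whenever $t\le t_{3}$. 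Thus $g'\le 0$ a.e.\ on $[0,t_{3}]$, $g$ is nonincreasing there, and specializing to $t_{1}\le t_{2}\le t_{3}$ gives precisely~\eqref{eq:self}. The curve $\gamma$ is therefore a bounded continuous self-contracted planar curve, and Theorem~\ref{Theorem_main} immediately yields $\length(\gamma)\le(8\pi+2)\,D(\gamma)<+\infty$.

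The only delicate point is the justification of the chain rule used in computing $\varphi'$ and $g'$, together with the legitimacy of the pointwise selection $\dot\gamma(t)=-v(t)$, $v(t)\in\partial f(\gamma(t))$, and the Lyapunov property of $f\circ\gamma$. These are classical ingredients in the Brezis theory of maximal monotone evolution equations (\cite{Brezis}) and rely on absolute continuity of $\gamma$ rather than on smoothness of $f$; one simply needs to invoke them carefully because $f$ is only assumed continuous and not $C^{1}$.
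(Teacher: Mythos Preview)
Your proof is correct and follows essentially the same route as the paper: establish boundedness, prove self-contraction via the subgradient inequality combined with the Lyapunov property of $f\circ\gamma$, and then invoke Theorem~\ref{Theorem_main}. The only minor difference is that you obtain boundedness by Fej\'er monotonicity of $t\mapsto\|\gamma(t)-x^{*}\|$ toward a minimizer, whereas the paper argues that $\gamma(t)\in\{f\le f(x_0)\}$; your version is in fact slightly more robust, since the sublevel set need not be bounded when $f$ merely admits a minimum without being coercive.
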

\forgotten

The analogous of Proposition~\ref{proposition_GDS} holds true.

\begin{proposition}
[Subgradient convex flow orbits are self-contracted curves] Let
$f:\mathbb{R}^{n}\rightarrow\mathbb{R}$ be a convex continuous
function.
Then, for every $x_0\in\R^n,$ the trajectory~$\gamma$ of the subgradient system~\eqref{SDS}
is a self-contracted curve.
\end{proposition}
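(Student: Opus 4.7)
The plan is to adapt the proof of Proposition~\ref{proposition_GDS} to the nonsmooth convex setting, by replacing pointwise differentiation of $\gamma$ with almost-everywhere differentiation of the absolutely continuous trajectory, and by invoking the subdifferential inequality in place of the quasiconvexity characterization~\eqref{qc}. Fix $t_{3}\in[0,+\infty)$ and set
\[
g(t)=\frac{1}{2}\,\|\gamma(t)-\gamma(t_{3})\|^{2},\quad t\in[0,t_{3}].
\]
The aim is to show that $g$ is nonincreasing on $[0,t_{3}]$: this is precisely the self-contractedness condition~\eqref{eq:self} for every triple $t_{1}\le t_{2}\le t_{3}$, since it reads $g(t_{1})\ge g(t_{2})$.

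First I would observe that $\gamma$ is absolutely continuous (hence bounded) on $[0,t_{3}]$ and that $x\mapsto\frac{1}{2}\|x-\gamma(t_{3})\|^{2}$ is Lipschitz on every bounded set, so the composite $g$ is absolutely continuous. Hence it suffices to prove $g'(t)\le 0$ for almost every $t\in[0,t_{3}]$. At any $t$ where $\gamma$ is differentiable and $\dot{\gamma}(t)\in-\partial f(\gamma(t))$, set $p(t):=-\dot{\gamma}(t)\in\partial f(\gamma(t))$; then
\[
g'(t)=\langle\dot{\gamma}(t),\gamma(t)-\gamma(t_{3})\rangle=\langle p(t),\gamma(t_{3})-\gamma(t)\rangle.
\]
Applying the subdifferential inequality to $x=\gamma(t)$, $y=\gamma(t_{3})$ yields
\[
f(\gamma(t_{3}))\ge f(\gamma(t))+\langle p(t),\gamma(t_{3})-\gamma(t)\rangle=f(\gamma(t))+g'(t),
\]
so that $g'(t)\le f(\gamma(t_{3}))-f(\gamma(t))$.

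The concluding ingredient is the Lyapunov property: $t\mapsto f(\gamma(t))$ is nonincreasing along the subgradient trajectory. This is the nonsmooth counterpart of the trivial identity $\frac{d}{dt}f(\gamma(t))=-\|\nabla f(\gamma(t))\|^{2}$ used in Proposition~\ref{proposition_GDS}, and is a classical fact from the theory of evolution equations governed by maximal monotone operators (\cf~Br\'ezis~\cite{Brezis}). Once granted, for $t\le t_{3}$ we have $f(\gamma(t_{3}))\le f(\gamma(t))$, hence $g'(t)\le 0$ almost everywhere on $[0,t_{3}]$, and the absolute continuity of $g$ delivers its monotonicity, completing the proof. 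The main delicate point is not the convexity computation but this Lyapunov step, since nonsmoothness prevents a one-line chain-rule argument; once it is in place, the proof is the same bookkeeping exercise as in the smooth quasiconvex case.
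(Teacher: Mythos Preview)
Your proof is correct and follows essentially the same route as the paper's: fix a terminal time, differentiate the squared distance, bound its derivative by $f(\gamma(t_{3}))-f(\gamma(t))$ via the subdifferential inequality, and conclude using the Lyapunov monotonicity of $t\mapsto f(\gamma(t))$ (which both you and the paper defer to Br\'ezis~\cite{Brezis}). If anything, you are slightly more explicit than the paper about why the almost-everywhere sign condition on $g'$ suffices, by arguing that $g$ inherits absolute continuity from~$\gamma$.
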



\begin{proof}
We give a sketch of proof for the reader convenience (we refer to
\cite{Brezis} for details). It is easy to prove that $t\in
[0,+\infty)\mapsto f(\gamma (t))$ is convex and that for almost all
$t\geq 0$ we have
$$
\frac{d}{dt}f(\gamma (t))= -||\dot\gamma(t)||^2\leq 0.
$$
Therefore $t\mapsto f(\gamma (t))$ is nonincreasing and
$\gamma(t)\in \{f\leq f(x_0)\}$ is bounded. Moreover, for all
$t_1>0$ and for almost all $t\in [0,\,t_1]$
$$
\frac{1}{2}\, \frac{d}{dt} ||\gamma(t)-\gamma(t_1)||^2= \langle
\dot\gamma(t), \gamma(t)-\gamma(t_1)\rangle \leq
f(\gamma(t_1))-f(\gamma(t))\leq 0.
$$
This implies that $t\in [0,t_1)\mapsto  ||\gamma(t)-\gamma(t_1)||^2$
is nonincreasing yielding that $\gamma$ is a self-contracted curve.
\end{proof}

When $n=2$, we have the following generalization of
Theorem~\ref{thm-intro-convex}.

\begin{corollary}[Orbits of a subgradient convex flow] \label{Corollary_convex}
Let $f:\R^{2} \rightarrow \mathbb{R}$ be a convex continuous function which admits a minimum.
Then, for every $x_0 \in \R^2$, the orbit of the gradient flow~\eqref{SDS} converges and has finite length.
\end{corollary}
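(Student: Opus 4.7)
The plan is to read this corollary as a direct combination of the preceding proposition (which produces a self-contracted curve out of any orbit of~\eqref{SDS}) with Theorem~\ref{Theorem_main} and Corollary~\ref{corollary-convergence}, which together yield convergence and a length bound for any bounded self-contracted planar curve. The only input still to be supplied is the \emph{boundedness} of the orbit~$\gamma$, since the hypothesis of the corollary is merely that $f$ is convex continuous with a minimum and \emph{not} that $f$ is coercive: the sublevel set $\{f\le f(x_0)\}$ used in the preceding proposition (which contains~$\gamma$) need not be bounded in general.

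To produce boundedness, I would use any minimizer $x^{\ast}\in\R^2$ of $f$ (which exists by hypothesis) as a fixed reference point. Since $0\in\partial f(x^{\ast})$, the constant curve $t\mapsto x^{\ast}$ is a solution of~\eqref{SDS}, and I recover in one line the classical contractivity property of subgradient flows of convex functions: from $-\dot\gamma(t)\in\partial f(\gamma(t))$ and the subgradient inequality
\[
f(x^{\ast})\;\geq\; f(\gamma(t))+\langle-\dot\gamma(t),\,x^{\ast}-\gamma(t)\rangle,
\]
together with $f(x^{\ast})\le f(\gamma(t))$, I would obtain, exactly as in the proof of the preceding proposition,
\[
\frac{1}{2}\frac{d}{dt}\|\gamma(t)-x^{\ast}\|^{2}\;=\;\langle\dot\gamma(t),\gamma(t)-x^{\ast}\rangle\;\leq\; f(x^{\ast})-f(\gamma(t))\;\leq\; 0
\]
for almost every $t\geq 0$. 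Absolute continuity of $\gamma$ then gives $\|\gamma(t)-x^{\ast}\|\leq\|x_0-x^{\ast}\|$ for all $t\geq 0$, so $\gamma$ takes values in a fixed closed ball.

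With boundedness in hand, the preceding proposition makes $\gamma$ a bounded continuous self-contracted planar curve. Corollary~\ref{corollary-convergence} then delivers convergence of $\gamma(t)$ to some $x_\infty\in\R^2$ as $t\to+\infty$, and Theorem~\ref{Theorem_main} yields $\length(\gamma)\leq(8\pi+2)\,\|x_0-x_\infty\|<+\infty$. There is no real obstacle: all the substantial work has been done upstream, and the only new ingredient is the short contractivity computation above, which plays exactly the role that coercivity played in Corollary~\ref{Corollary_quasiconvex}.
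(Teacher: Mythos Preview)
Your proposal is correct and follows exactly the route the paper intends: combine the preceding proposition (orbits of~\eqref{SDS} are self-contracted) with Corollary~\ref{corollary-convergence} and Theorem~\ref{Theorem_main}. The paper states the corollary without proof, treating it as immediate.

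You are right to flag boundedness as the only missing ingredient, and your contractivity computation with respect to a minimizer~$x^{\ast}$ is the standard and correct fix. Note that the preceding proposition's proof asserts ``$\gamma(t)\in\{f\le f(x_0)\}$ is bounded'', which indeed presupposes coercivity and is not guaranteed by the hypothesis ``$f$ admits a minimum'' alone (e.g.\ $f(x,y)=x^{2}$). Your Fej\'er-monotonicity argument with respect to~$x^{\ast}$ is precisely what is needed to repair this, and it is in the same spirit as the computation already carried out in that proposition's proof---one simply replaces $\gamma(t_1)$ by the fixed minimizer~$x^{\ast}$. So there is no genuine divergence from the paper; you have just made explicit a step the paper leaves implicit (or slightly misstates).
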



\subsection{Trajectories orthogonal to a convex foliation}
\label{sec:7}

In this section we consider orbits that are ``orthogonal" to a
\emph{convex foliation}. Let us introduce the definition of the
latter. (For any subset $C\subset \R^2,$ ${\rm int}\, C$ denotes the
interior of $C$ and $\partial C$ its boundary.)  \smallskip

Let $\{C_{\alpha}\}_{\alpha\in [0,A]}$ (where $A>0$) be a family of
subsets of $\R^2$ such that
\begin{eqnarray} \label{def-foliation}
\begin{array}{l}
\text{(i) For all $\alpha\in [0,A],$ $C_\alpha$ is convex compact}
\smallskip \\
\text{(ii) If $\alpha >\alpha',$ then $C_\alpha\subset {\rm int}\, C_{\alpha '},$}\smallskip \\
\text{(iii) For every $x\in C_0\setminus {\rm int}\,C_A,$ there
exists a unique $\alpha\in [0,A]$ such that $x\in \partial
C_\alpha.$}
\end{array}
\end{eqnarray}
We shall refer to the above as a foliation made up of convex
surfaces. We shall now define a notion of ``orthogonality" for an
orbit $\gamma$ with respect to this foliation. To this end, let
$T\in (0,+\infty]$ and $\gamma : [0,T)\to\R^2$ be an absolutely
continuous curve. We say that the curve $\gamma$ is ``orthogonal" to the
foliation defined in~\eqref{def-foliation} if the following
conditions hold:
\begin{eqnarray}
\label{def-courbe-orth}
\begin{array}{l}
\text{(i) for every $t\in [0,T),$ there exists $\alpha\in [0,A]$
such that $\gamma (t)\in \partial C_\alpha,$}\smallskip \\
\text{(ii) for almost all $t\in (0,T)$, if $\gamma (t)\in \partial
C_\alpha,$ then for all $x\in C_\alpha$, $\langle \dot\gamma (t), x-\gamma(t)\rangle\geq 0,$}\smallskip \\
\text{(iii) if $t'>t$ and $\gamma(t)\in  C_\alpha ,$ then
$\gamma(t')\in C_{\alpha }.$}
\end{array}
\end{eqnarray}

Condition (ii) in \eqref{def-courbe-orth} is a nonsmooth
generalization of orthogonality: if $\partial C_\alpha$ is smooth at
$\gamma(t)$ and $\gamma$ is differentiable at $t$ then
$\dot\gamma(t)$ is orthogonal to the tangent space of $\partial
C_{\alpha}$ at $\gamma(t)$. Further, condition (iii) guarantees that the curve $\gamma (t)$ enters into each convex set of the
foliation. In this context, one has the following result.

\begin{proposition}
[Orbits orthogonal to a convex foliation]
\label{Proposition_foliation} The curve $\gamma$ is a bounded
self-contracted curve, thus, of bounded length.
\end{proposition}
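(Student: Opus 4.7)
The plan is to verify the two hypotheses of Theorem~\ref{Theorem_main} for $\gamma$, namely that $\gamma$ is \emph{bounded} and \emph{self-contracted}, and then invoke that theorem directly to conclude finite length. Boundedness is immediate: by condition (i) of~\eqref{def-courbe-orth}, every point $\gamma(t)$ lies in $\partial C_\alpha$ for some $\alpha\in[0,A]$, hence in $C_0$; and $C_0$ is compact by (i) of~\eqref{def-foliation}.

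The core of the argument is self-contractedness. Fix $0\le t_1\le t_2\le t_3<T$ and, following the template of Proposition~\ref{proposition_GDS}, introduce
\[
g(t)=\tfrac{1}{2}\|\gamma(t)-\gamma(t_3)\|^{2},\qquad t\in[0,t_3].
\]
Since $\gamma$ is absolutely continuous, so is $g$, and for a.e.\ $t\in(0,t_3)$ one has $g'(t)=\langle\dot\gamma(t),\gamma(t)-\gamma(t_3)\rangle$. The task reduces to showing $g'(t)\le 0$ a.e. Pick such a $t$, and let $\alpha$ be the parameter with $\gamma(t)\in\partial C_\alpha$ (condition (i) of~\eqref{def-courbe-orth}). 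Condition (iii) of~\eqref{def-courbe-orth}, applied with $t'=t_3>t$ and the containment $\gamma(t)\in\partial C_\alpha\subset C_\alpha$, yields $\gamma(t_3)\in C_\alpha$. Then condition (ii) of~\eqref{def-courbe-orth}, applied with the choice $x=\gamma(t_3)\in C_\alpha$, gives $\langle\dot\gamma(t),\gamma(t_3)-\gamma(t)\rangle\ge 0$, that is, $g'(t)\le 0$. Hence $g$ is nonincreasing on $[0,t_3]$, and in particular $g(t_1)\ge g(t_2)$, which is exactly~\eqref{eq:self}.

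Having established that $\gamma$ is bounded and self-contracted, Theorem~\ref{Theorem_main} (applied after extending $\gamma$ by continuity to $t_+=T$, which is justified by Proposition~\ref{prop:cont}) provides the bound $\length(\gamma)\le(8\pi+2)\,D(\gamma)$, completing the proof.

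The expected main obstacle is really just \emph{bookkeeping}: one has to check that condition (iii) of~\eqref{def-courbe-orth} is what converts the pointwise inequality (ii) into a \emph{time-forward} monotonicity (i.e., that $\gamma(t_3)$ is available as an admissible test point $x$ in $C_\alpha$). Without (iii), the inequality in (ii) would only give information about positions still ``ahead'' of the foliation level containing $\gamma(t)$, and one could not close the loop on $g'$. There is no analytic subtlety beyond the absolute continuity of $g$, which is standard.
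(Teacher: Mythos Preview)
Your proof is correct and follows essentially the same route as the paper's: differentiate the squared distance to a fixed later time, use condition~(i) to locate $\gamma(t)$ on some $\partial C_\alpha$, condition~(iii) to place the later point inside $C_\alpha$, and condition~(ii) to conclude the derivative is nonpositive, then invoke Theorem~\ref{Theorem_main}. Your write-up is in fact a bit more explicit than the paper's (spelling out boundedness via $C_0$ and the absolute continuity of $g$), but the argument is the same.
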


\begin{proof}
The curve $\gamma$ is clearly bounded. Let $0\leq t_1<T$. Then, for
almost all $t\in [0,t_1]$, we have
\begin{eqnarray}
\label{form123} \frac{1}{2}\, \frac{d}{dt}
||\gamma(t)-\gamma(t_1)||^2= \langle \dot\gamma(t),
\gamma(t)-\gamma(t_1)\rangle.
\end{eqnarray}
By \eqref{def-courbe-orth} (i), we have $\gamma(t)\in \partial C_\alpha$ for
some $\alpha.$ By \eqref{def-courbe-orth} (iii) and since $t_1\geq
t,$ we also have $\gamma (t_1)\in C_\alpha.$ Therefore, \eqref{def-courbe-orth}
(ii) implies that the right-hand side of \eqref{form123} is
nonpositive. It follows that $t\in [0,t_1]\mapsto
||\gamma(t)-\gamma(t_1)||^2$ is nonincreasing and $\gamma$ is
self-contracted. Applying Theorem~\ref{Theorem_main} finishes the
proof. \end{proof}

\begin{remark} (i) The sublevel sets of a continuous quasiconvex
function need not define a convex foliation. Indeed, consider the
quasiconvex function $f:[-2,2]\rightarrow\mathbb{R}$ given by
\begin{equation*}
f(x)=\left\{
\begin{array}{cc}
x, & \text{if }  -2\leq x\leq 0, \\
0, & \text{if } \phantom{-}0\leq x\leq 1, \\
x-1, & \text{if } \phantom{-}1\leq x\leq 2.
\end{array} \right.
\end{equation*}
Then the sublevel sets of $f$ do not define a foliation on
$[-2,2]\subset\mathbb{R}$ since property (iii) of
\eqref{def-foliation} fails at the level set $[f=0]$. This drawback
appears whenever the level sets of such functions have ``flat" parts
outside the set of their global minimizers. Actually,
it follows from \cite[Theorem~3.1]{yboon} that the sublevel sets of
a continuous quasiconvex coercive function $f$ define a convex
foliation if and only if the function is \emph{semi-strictly
quasiconvex}. (We refer to \cite{yboon} for the exact definition and
basic properties of semi-strictly quasiconvex functions.) \smallskip

(ii) Let $f:\R^2\to\R$ be a coercive $C^{1,1}$ quasiconvex function
and $\gamma : [0,+\infty)\to\R^2$ be the solution of \eqref{GDS}.
Let $x_\infty$ be the limit of $\gamma(t)$ as $t\to +\infty$ and
assume that $f$ has no critical point in $\{f(x_\infty)< f\leq
f(x_0)\}$. Then, it is not difficult to see that $\{f\leq
\alpha\}_{\alpha\in [f(x_\infty),f(x_0)]}$ is a family of $C^1$
convex compact subsets which satisfies \eqref{def-foliation} (in
fact, $f$ is semi-strictly quasiconvex in $[f(x_\infty),f(x_0)]$) and
$\gamma$ satisfies \eqref{def-courbe-orth}. \smallskip

(iii) Despite the first remark,
Proposition~\ref{Proposition_foliation} can be used to obtain the
result of Corollary~\ref{Corollary_quasiconvex} without the extra
assumption made in the second remark. The reason is that the trajectory of the
gradient flow will not pass through the flat parts of $f$ anyway (if
it does, then it stops there). We leave the technical details to the
reader.
\end{remark}

\section{Two counter-examples}\label{sec:ex}

\subsection{Absence of Convexity} \label{subsec:1}
The second conclusion of Corollary~\ref{Corollary_quasiconvex} fails
if $f$ is not quasiconvex, even when the function is $C^{\infty}$
and has a unique critical point at its global minimum. Let us give
an explicit example below: \smallskip

Define a function~$f:\R^{2} \to \R$ in polar coordinates as
$$
f(r,\theta) = e^{-1/r} (1+r+\sin(\tfrac{1}{r}+\theta))
$$
with $f(O)=0$. The graph of~$f$ in the plane $\theta = 0$ looks like
the graph of Fig.~\ref{fig:5}.
\setlength\unitlength{1pt} \noindent \begin{figure}[htbp] \noindent
\begin{picture}(370,100)(45,0)
\put(160,0){$\includegraphics[height=100pt,width=150pt]{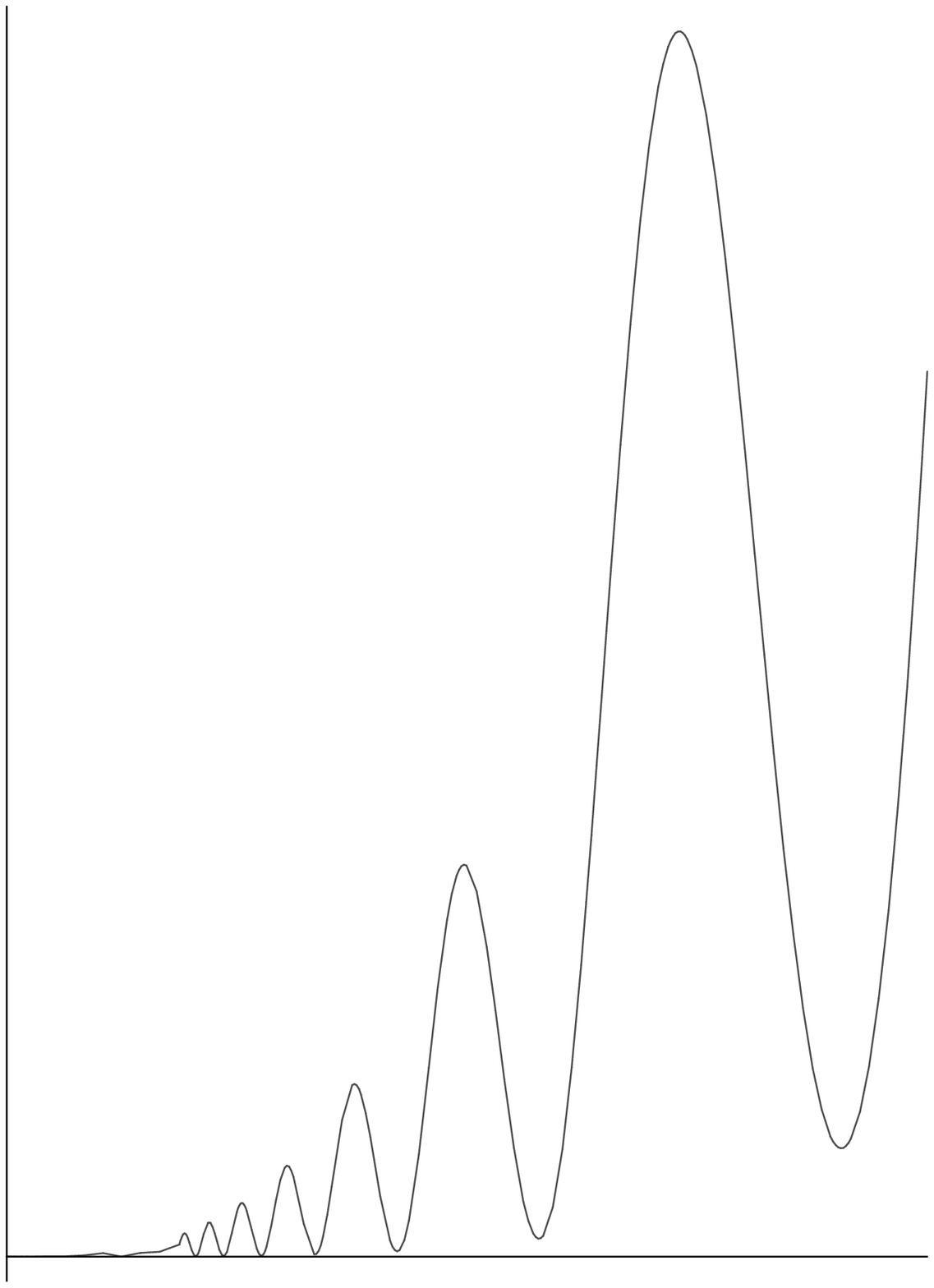}$}
\end{picture}
\caption{} \label{fig:5}
\end{figure}

One can check that $f$ is smooth, positive away from~$O$, with no
critical point except at the origin. The gradient trajectory of~$f$
issued from the point~$(r,\theta)=((\frac{3\pi}{2})^{-1},0)$ remains
close to the spiral given by
\begin{equation*}
\left\{
\begin{array}{rcl}
r & = & \left( \frac{3\pi}{2}+t \right)^{-1} \\
\theta & = & -t
\end{array}
\right.
\end{equation*}
where $t$ runs over~$[0,\infty).$ Therefore, it converges to the
origin and its length is infinite.

\subsection{Thom conjecture fails for convex functions}\label{subsec:2}
Let $f:\mathbb{R}^2\rightarrow\mathbb{R}$ be a convex continuous
function which admits a minimum. Then,
Corollary~\ref{Corollary_convex} guarantees that the orbits of the
gradient flow of $f$ have finite length (thus, \textit{a fortiori},
are converging to the global minimum of $f$). However, it may happen
that each orbit turns around its limit infinitely many times. In
particular the corresponding statement of the Thom conjecture fails
in the convex case. \smallskip

We construct below a counter-example using a technique due to
D.~Torralba~\cite{Torralba} which allows us to build a convex function
with prescribed level-sets given by a sequence of nested convex
sets. Let us recall his result. \smallskip

For any convex set $C\subset\R^n,$ the support function of $C$ is
defined as $\delta_C(x^*)=\sup_{x\in C}\langle x,x^*\rangle$ for all
$x^*\in \R^n.$ Let $\{C_k\}_{k\in\N}$ be a decreasing sequence of
convex compact subsets of $\R^2$ such that $C_{k+1}\subset {\rm
int}\, C_k.$ Set
$$
K_k=\max_{||x^*||=1}\frac{\delta_{C_{k-1}}(x^*)-\delta_{C_k}(x^*)}{
\delta_{C_{k}}(x^*)-\delta_{C_{k+1}}(x^*)}.
$$
Then Torralba's theorem~\cite{Torralba} asserts that for every real sequence
$\{\lambda_k\}_{k\in\N}$ satisfying
\begin{eqnarray}\label{cond-lambda}
0<K_k(\lambda_k-\lambda_{k+1})\le \lambda_{k-1}-\lambda_k \quad \mbox{for every } k\geq 1,
\end{eqnarray}
there exists a continuous convex function $f$ such that for every
$k\in\N,$ $\{f\le \lambda_k\}=C_k$. Moreover, $\lambda_k$ converges
to ${\rm min}\, f$ and, for any $k\geq 0$ and
$\lambda\in\lbrack\lambda_{k+1},\lambda_{k}],$ we have
\begin{equation}\label{interpolation-convexe}
\{ f\leq\lambda\}=\left( \frac{\lambda-\lambda_{k+1}}{\lambda_{k}
-\lambda_{k+1}}\right) C_{k}+\left(
\frac{\lambda_{k}-\lambda}{\lambda_{k}-\lambda_{k+1}}\right) C_{k+1}
\end{equation}
({\it i.e.,} the level-sets of $f$ are convex interpolations of the
two
nearest prescribed level-sets).\\
\setlength\unitlength{1pt} \noindent \begin{figure}[htbp] \noindent
\begin{picture}(440,250)(45,0)
\put(140,0){$\includegraphics[height=250pt]{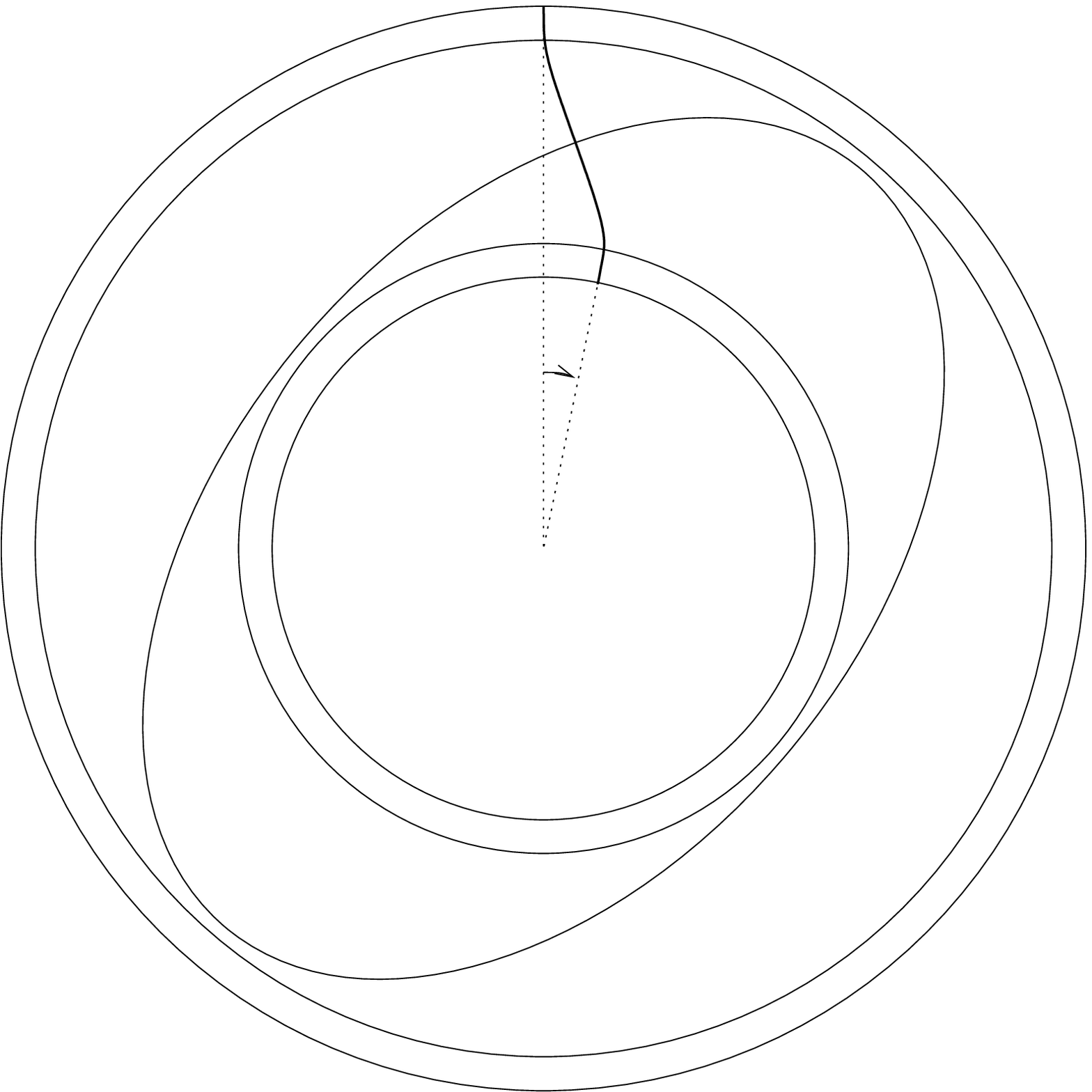}$}
\put(266,168){$\theta$} \put(258,253){$A_0$} \put(276,173){$A_4$}
\put(278,205){$\gamma_0$} \put(261,115){$O$} \put(130,85){$C_0$}
\put(153,109){$C_1$} \put(175,80){$C_2$} \put(205,64){$C_3$}
\put(225,80){$C_4$}
\end{picture}
\caption{} \label{fig:6}
\end{figure}

\noindent{\it Step 1. Constructing a first piece of trajectory.}
Consider the finite decreasing sequence of convex sets $C_0=B(O,1),$
$C_1=B(O,0.9),$ $C_2=E,$ $C_3=B(O,0.6)$ and $C_4=B(O,1/2)$ where $E$
is a compact set bounded by an ellipse (see Fig.~\ref{fig:6}). It is easy to find a sequence
$\{\lambda_k\}$ which satisfies \eqref{cond-lambda}: set $K={\rm
max}\{K_1, K_2,K_3,K_4\}+1> 1$ (since $C'\subset {\rm int}\, C$
implies $\delta_{C}>\delta_{C'}$), take $\lambda_0=1,$
$\lambda_1=1/2$ and
\begin{eqnarray}\label{def-lambda}
\lambda_k-\lambda_{k+1}=\frac{1}{K^k}(\lambda_0-\lambda_1).
\end{eqnarray}
We then obtain a positive function $f_0:C_0\to\R$ with ${\rm
argmin}\,f_0:= \{f_0= {\rm min}\,f_0\} = C_4.$ Consider the subgradient trajectory $\gamma_0$
starting from the point $A_0$ of $C_0$ (see Fig.~\ref{fig:6}). It
reaches $A_4\in\partial C_4.$ From \eqref{interpolation-convexe}
this trajectory is radial (pointing towards the origin) between
$\partial C_0=\{f_0=\lambda_0\}$ and $\partial
C_1=\{f_0=\lambda_1\}$ and between $\partial C_3=\{f_0=\lambda_3\}$
and $\partial C_4=\{f_0=\lambda_4\}.$ Due to the presence of the
ellipse $C_2,$ the trajectory deflects with an angle $\theta:=
\widehat{A_0 O A_4}>0$
in the clockwise direction. \\

\noindent{\it Step 2. Construction of the function from the previous
step.} Consider the transformation $\mathcal{T}=r\circ h,$ where $h$
is the homothety of center $O$ and coefficient $1/2$ and $r$ is the
rotation of center $O$ and angle $\theta$. We define, for
all $k\in\N$ and $\bar k\in \{0,1,2,3\}$
\begin{eqnarray*}
C_k= \mathcal{T}^{[k/4]}(C_{\overline{k}}) \quad \text{where $[k/4]$
is the integer part of $k/4$ and $\overline{k}=k$ (modulo $4$)}
\end{eqnarray*}
(see Fig.~\ref{fig:7} for the first steps of the construction).
\setlength\unitlength{1pt} \noindent \begin{figure}[htbp] \noindent
\begin{picture}(440,250)(45,0)
\put(140,0){$\includegraphics[height=250pt]{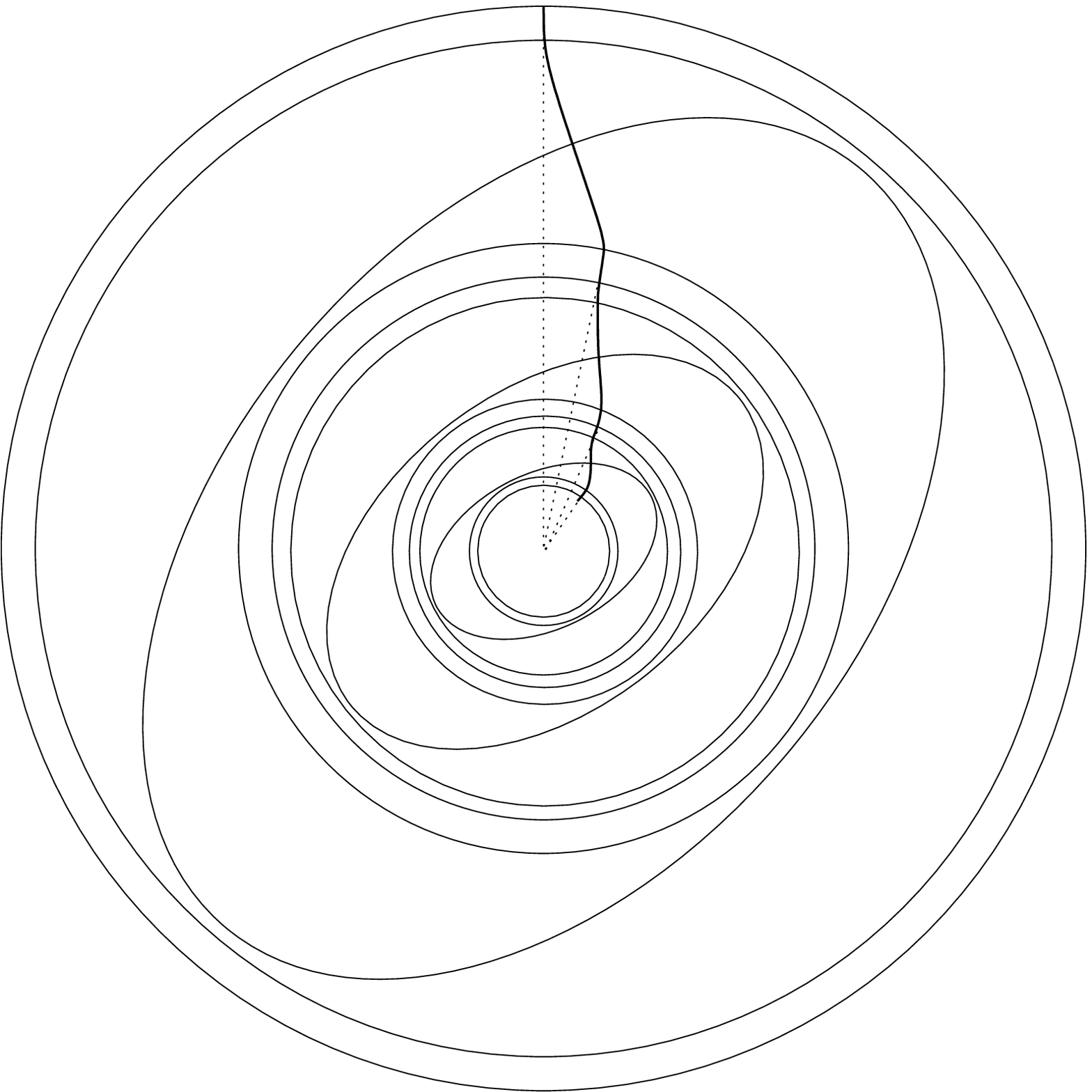}$}
\end{picture}
\caption{} \label{fig:7}
\end{figure}

\noindent The sequence of convex sets $\{C_k\}$ satisfies the
assumptions of Torralba's theorem and we can define a sequence
$\{\lambda_k\}$ as in \eqref{def-lambda} which satisfies
\eqref{cond-lambda} (note that $\{K_k\}$ is 4--periodic since, for
all convex set $C\subset \R^2$ and $x^*\in\R^2,$
$\delta_{\mathcal{T}(C)}(x^*)=\frac{1}{2}\delta_{C}(r^{-1}(x^*))$).
We obtain a convex continuous function $f:C_0\to \R^+$ with ${\rm
argmin}\,f=\{O\}.$ The trajectory starting from the top of $C_0$
spirals around the origin and converges to $O$ (see Fig.~\ref{fig:7}
where the beginning
of the trajectory is drawn with a deflection of $3\theta$).\\

\noindent{\it Step 3. Smoothing $f$.} Actually, the function $f$
built above is $C^\infty$ except at the origin and at the boundaries
$\partial C_k$. It is possible to smooth out $f$ in order to obtain a
function which is $C^\infty$ everywhere except at the origin and
$C^m$ at the origin (for any fixed $m\geq 1$). The smoothing procedure
is quite involved from a technical point of view and is omitted. We
refer the interested reader to \cite[Section~4.3]{BDLM-2008} where
such a smoothing is realized (in a different context). This
procedure does not modify significantly neither the function nor its
gradient trajectories ({\it i.e.} they remain a spiral). This
concludes the construction.

\bigskip

\noindent\textbf{Acknowledgment} This work has been realized during
a research stay of the first author in the University of Tours
(Spring 2008). The authors acknowledge useful discussions with
G.~Barles (Tours), J.~Bolte (Paris~6), H.~Giacomini (Tours),
N.~Hadjisavvas (Aegean) and M.~Hassaine (Talca).

\bigskip

\end{document}